\documentclass[a4paper,11pt]{article}

\usepackage{amsmath,amssymb,amsthm}
\usepackage[margin=2cm]{geometry}
\usepackage{color}

\usepackage[utf8]{inputenc}
\usepackage[T1]{fontenc}
\usepackage{dsfont}
\usepackage{mathrsfs}
\usepackage{hyperref}


\newcommand{\bbR}{\mathbb{R}}

\renewcommand{\phi}{\varphi}

\newtheorem{theo}{Theorem}[section]
\newtheorem{lemm}[theo]{Lemma}
\newtheorem{prop}[theo]{Proposition}

\newtheorem{rema}[theo]{Remark}

\newtheorem{coro}[theo]{Corollary}

\title{Narrow equidistribution and counting of closed geodesics on noncompact manifolds}
\author{Barbara Schapira, Samuel Tapie}
\date{Version \today}

\begin{document}   
\maketitle

\begin{abstract}  We prove the equidistribution of (weighted) periodic orbits of the geodesic flow on noncompact negatively curved manifolds toward equilibrium states in the narrow topology, i.e. in the dual of bounded continuous functions. We deduce an exact asymptotic counting for periodic orbits (weighted or not), which was previously known only for geometrically finite manifolds. 
\footnote{ Keywords\,: Negative curvature, geodesic flow, periodic orbits, equidistribution, Gibbs measure, counting.}
\footnote{MSC Classification   37A25, 37A35, 37D35, 37D40.}
\end{abstract}

\section{Introduction}

A well known feature of compact hyperbolic dynamics is the abundance of periodic orbits: they have a positive exponential growth rate, equal to the topological entropy. 
Moreover, the periodic measures supported by these orbits become equidistributed towards the measure of maximal entropy of the system. 
A weighted version of this property also holds : given any H\"older potential $F$, the periodic measures weighted by the periods of the potential become equidistributed towards the (unique) equilibrium state of the potential, see the classical works of Bowen \cite{Bowen} and Parry-Pollicott \cite{PP}. 

A typical geometric example  is the geodesic flow of a compact negatively curved manifold, which is an Anosov flow, and therefore satisfies the above properties. In this geometric context, it has been proved in \cite{Roblin, PPS} that similar equidistribution properties also hold in a noncompact setting. Let $M$ be a negatively curved manifold, $T^1M$ its unit tangent bundle, and $(g^t)$ the geodesic flow. As soon as it admits a finite invariant measure maximizing entropy, called the {\em 
Bowen-Margulis-Sullivan measure} $m_{BMS}$, 
the average of all orbital measures supported by periodic orbits of length at most $T$ converge to the normalized measure $\bar m_{BMS}$, in 
the {\em vague topology}, i.e. in the dual of continuous functions \emph{with compact support}.

\medskip

A  weighted version of this result also holds\,:  given a H\"older continuous map $F:T^1M\to \mathbb{R}$, 
as soon as it admits a finite equilibrium state $m_F$ the orbital measures supported by periodic orbits of length at most $T$, conveniently weighted by the periods of the potential $F$, 
converge to the normalized measure $\bar m_F$ in the vague topology.

\medskip

A major motivation for proving such equidistribution results is to get asymptotic counting estimates for the number of (weighted) periodic orbits of length at most $T$. However, it turns out that the equidistribution property required to get such counting estimates is a stronger convergence, in the {\em narrow topology}, i.e. the dual of
continuous \emph{bounded} functions. Until now, such \emph{narrow equidistribution} or such \emph{asymptotic counting} for periodic orbits have been proven only when $M$ is {\em geometrically finite} in \cite{Roblin, PPS}. 

In this note, inspired by work done in \cite{PS16} and \cite{ST19}, we remove this assumption and show this narrow equidistribution as soon as the Bowen-Margulis-Sullivan (or the equilibrium state) is finite.

\medskip

Let us precise some notations, mostly coming from \cite{PPS}. 
We denote by $\mathcal{P}'$ the set of primitive periodic orbits of the geodesic flow and $\mathcal{P}'(t)$ the subset of those primitive periodic orbits with length $\ell(p)$ at most $t$. Given a compact set $\mathcal{W}\subset T^1M$, we denote by $\mathcal{P}'_\mathcal{W}$ (resp.  $\mathcal{P}'_\mathcal{W}(t)$)
 the set of primitive periodic orbits (resp. of length at most $t$) which intersect $\mathcal W$. 
If $p$ is an oriented periodic orbit, let $\mathcal{L}_p$ be the periodic measure of mass $\ell(p)$ supported on $p$, so that $\frac{1}{\ell(p)}\mathcal{L}_p$ is a probability measure.

If $F:T^1M\to\bbR$ is a H\"older continuous potential, we denote by $\delta_F$ its critical exponent, and  
$m_F$ the Gibbs measure asociated to $F$, given by the Patterson-Sullivan-Gibbs construction (see \cite{PPS}). 
Under some additional geometric assumptions (pinched negative curvature, bounded derivatives of the curvature), one knows that  $\delta_F$ is also 
the topological pressure of $F$ (see \cite{OP} when $F=0$ and \cite{PPS} for general $F$), and that when $m_F$ is finite, the normalized probability measure $\bar m_F = m_F/\|m_F\|$ is the unique equilibrium state for $F$. We will not need this characterization here. 
 
Our main result is the following.

\begin{theo}\label{theo:equid} Let $M$ be a manifold with  negative curvature satisfying $\kappa\le -a^2<0$, whose geodesic flow is topologically mixing. 
Let $F:T^1M \to \bbR$ be a H\"older-continuous map, with finite critical exponent $\delta_F$, which admits a finite  Gibbs measure $m_F$. 
Assume without loss of generality that its topological pressure $\delta_F$ is positive. Let $W\subset M$ be a compact set and $\mathcal{W}=T^1W$. 
Assume that the interior of $\mathcal{W}$ intersects at least a periodic orbit of $(g^t)$. 
Then 
\begin{equation}\label{equation:equid}
\delta_F T  e^{-\delta_F T} \sum_{p\in \mathcal{P}'_\mathcal{W}(T)}e^{\int_p F} \frac{1}{\ell(p)}\mathcal{L}_p \to \frac{m_F}{\|m_F\|}\,  \quad\mbox{when}\quad T\to + \infty
\end{equation} in the narrow topology, i.e. in the dual of continuous bounded functions.
\end{theo}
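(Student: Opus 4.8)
The strategy is to bootstrap from the already-known \emph{vague} equidistribution (from \cite{Roblin, PPS}) to \emph{narrow} equidistribution, and the only thing that needs to be added is tightness: no mass escapes to infinity. Concretely, write
$$\mu_T := \delta_F T e^{-\delta_F T}\sum_{p\in\calP'_\calW(T)} e^{\int_p F}\,\frac{1}{\ell(p)}\calL_p.$$
From \cite{PPS} we know two things: first, $\mu_T(1) = \delta_F T e^{-\delta_F T}\sum_{p\in\calP'_\calW(T)} e^{\int_p F} \to \|m_F\|$ as $T\to\infty$ (this is the counting asymptotic for geometrically finite-type estimates on the \emph{total mass}, which holds here because $\calW$ is fixed and $m_F$ is finite); and second, for every continuous compactly supported $\phi$, $\mu_T(\phi)\to m_F(\phi)$ in the vague sense. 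Granting these, narrow convergence is equivalent to the family $(\mu_T)$ being tight, i.e. for every $\e>0$ there is a compact $K\subset T^1M$ with $\limsup_T \mu_T(T^1M\setminus K) \le \e$. Indeed, tightness plus vague convergence to the finite measure $m_F/\|m_F\|$ (of total mass $1$, matching $\lim \mu_T(1)=1$ after normalization) forces narrow convergence.

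**The main step: tightness.** This is the heart of the matter and where the ideas of \cite{PS16, ST19} enter. The point is to control the $\mu_T$-mass of the ``cuspidal'' or ``escaping'' part of $T^1M$ uniformly in $T$. I would argue as follows. Fix a large compact core $K_0$ whose interior meets a periodic orbit (using the hypothesis on $\calW$), and for $R>0$ let $K_R$ be an $R$-neighborhood. A periodic orbit $p\in\calP'_\calW(T)$ contributes to $\mu_T(T^1M\setminus K_R)$ only through the portion of $p$ that wanders far into the ends of $M$; the mass of that portion, weighted by $e^{\int_p F}$, must be shown to be a small fraction of the total as $R\to\infty$, uniformly in $T$. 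The mechanism is the \emph{shadowing / Anosov closing} correspondence between long periodic orbits through $\calW$ and pieces of $m_F$-generic orbits (or, in the symbolic-coding language, between periodic words and the Gibbs measure on the shift), combined with the fact that $m_F$ itself is finite, hence gives small mass to the deep ends: $m_F(T^1M\setminus K_R)\to 0$ as $R\to\infty$. One converts this into a statement about periodic orbits via the standard two-sided bound: the weighted number of periodic orbits of length $\le T$ spending a definite proportion of time outside $K_R$ is controlled, through the orbital-counting / mixing estimates of \cite{PPS}, by $e^{-\delta_F T}$ times a sum that is comparable to (a power of) $m_F(T^1M\setminus K_R)^{-1}$... — more precisely one uses a Markov-type pigeonhole: if a long orbit spends total time $\ge \alpha T$ outside $K_R$, decompose its length into excursions; each excursion into an end of depth $R$ ``costs'' in the Gibbs weight, and summing the geometric series over the number of excursions shows the total weighted contribution is $O(\theta(R)^{\alpha T})$ for some $\theta(R)<1$ with $\theta(R)\to$ a controllable limit; dividing by the main term $\asymp e^{-\delta_F T}\cdot e^{\delta_F T} = 1$ (after normalization by $\delta_F T e^{-\delta_F T}$ against the full sum which is $\asymp 1$) yields a bound going to $0$ as $R\to\infty$, uniformly in $T$. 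Alternatively, and perhaps cleaner, one invokes the description of the Gibbs measure as a weak-* limit of the $\mu_T$'s on compact sets together with a uniform upper bound $\mu_T(\phi_R)\le C\, m_F(\psi_R)$ for suitable bump functions $\phi_R\le\psi_R$ supported off $K_R$, which is exactly the kind of estimate produced by the methods of \cite{ST19}.

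**Assembling the argument.** Once tightness is in hand, fix $\e>0$ and a compact $K$ with $\limsup_T\mu_T(T^1M\setminus K)<\e$ and $m_F(T^1M\setminus K)<\e\|m_F\|$. For bounded continuous $\phi$ with $\|\phi\|_\infty\le 1$, pick a compactly supported $\chi$ with $0\le\chi\le 1$, $\chi\equiv 1$ on $K$; then
$$\bigl|\mu_T(\phi) - \tfrac{m_F(\phi)}{\|m_F\|}\bigr| \le \bigl|\mu_T(\chi\phi) - \tfrac{m_F(\chi\phi)}{\|m_F\|}\bigr| + \mu_T(1-\chi) + \tfrac{m_F(1-\chi)}{\|m_F\|}.$$
The first term tends to $0$ by vague convergence (since $\chi\phi$ is continuous with compact support); the last is $<\e$; and $\limsup_T \mu_T(1-\chi)\le\limsup_T\mu_T(T^1M\setminus K)<\e$. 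Hence $\limsup_T|\mu_T(\phi)-m_F(\phi)/\|m_F\||\le 2\e$, and letting $\e\to 0$ gives \eqref{equation:equid}. The topological mixing hypothesis and the positivity of $\delta_F$ are used precisely to guarantee the input from \cite{PPS} (existence and abundance of periodic orbits through $\calW$, the vague equidistribution, and the total-mass asymptotic), and the curvature pinching $\kappa\le -a^2$ ensures the shadowing constants are uniform. The main obstacle, as indicated, is the quantitative tightness estimate for the weighted periodic measures uniformly in $T$; this is where the new contribution of the paper lies, extending the geometrically finite argument of \cite{Roblin, PPS} via the finiteness-exploiting techniques of \cite{PS16, ST19}.
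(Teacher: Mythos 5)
Your high-level skeleton (vague convergence plus tightness implies narrow convergence, then integrate the constant $1$) agrees with the paper's, and your final assembly paragraph is correct as far as it goes. But both substantive inputs you invoke are not actually available, and the mechanisms you sketch for them do not work. First, the vague equidistribution quoted from \cite{PPS} is for the sum over \emph{all} primitive orbits $\mathcal{P}'(T)$, not over the restricted set $\mathcal{P}'_\mathcal{W}(T)$: an orbit avoiding $\mathcal{W}$ can still cross the support of a compactly supported test function, so the vague convergence of your $\mu_T$ requires a separate argument (this is Proposition \ref{prop:vague} of the paper, and it is not free). Likewise, the total-mass asymptotic $\mu_T(1)\to 1$ that you cite as known ``from \cite{PPS}'' is precisely Corollary \ref{coro:gibbs}, a new result of this paper in the geometrically infinite case; assuming it is circular.

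Second, and more seriously, your tightness argument uses the finiteness of $m_F$ only through $m_F(T^1M\setminus K_R)\to 0$, which holds for \emph{any} finite measure and gives no uniform control over the weighted time that periodic orbits of length at most $T$ spend outside $K_R$; the proposed ``geometric series over excursions'' with decay $\theta(R)^{\alpha T}$ is unsubstantiated, and no such exponential-in-$T$ decay is needed or true. The missing idea is the quantitative finiteness criterion of Pit--Schapira (Theorem \ref{finiteness-criterion}): $m_F$ is finite if and only if it is ergodic, conservative, and $\sum_{\gamma\in\Gamma_W} d(o,\gamma o)\,e^{\int_o^{\gamma o}(F-\delta_F)}<+\infty$, where $\Gamma_W$ encodes first returns to a compact set $W$. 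Each excursion of a periodic orbit $p$ outside $W_R$ is coded by an element $\alpha\in\Gamma_{\widetilde W}$ with $d(o,\alpha o)\ge 2R$, contributing at most $d(o,\alpha o)+2\,\diam(\widetilde W)$ to $\ell(p\cap W_R^c)$; decoupling the resulting double sum via the Shadow Lemma and Proposition \ref{prop:nice} bounds the escaping mass by the tail of that convergent series, uniformly in $T$. The same series, with tail taken in $d(o,go)\ge T-c-C$, controls the difference between the restricted and unrestricted sums in the vague-convergence step. (The paper also first proves everything for annuli $\mathcal{P}'_\mathcal{W}(T-c,T)$ and passes to cumulative sums by a summation lemma, but that is a technical convenience; the essential gap in your proposal is the failure to identify the finiteness criterion as the engine of both key estimates.)
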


Integrating the constant map equal to $1$ gives the following corollary. 
\begin{coro}\label{coro:gibbs} Under the same assumptions, we have 
\begin{equation}\label{equation:couting}  \sum_{p\in \mathcal{P}'_\mathcal{W}(T)}e^{\int_p F}  \sim \frac{e^{\delta_F T}}{\delta_F T}\,  \quad\mbox{when}\quad T\to + \infty\,.
\end{equation} 
\end{coro}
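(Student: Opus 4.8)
The plan is to reduce everything to what is already known from \cite{Roblin,PPS}, namely the \emph{vague} convergence of the weighted orbital measures, and then to upgrade it to narrow convergence by an escape-of-mass estimate. The first step is to set up the normalized measures
\[
\mu_T \;=\; \delta_F T\, e^{-\delta_F T}\!\!\sum_{p\in\mathcal P'_{\mathcal W}(T)} e^{\int_p F}\,\frac{1}{\ell(p)}\,\mathcal L_p,
\]
and recall that, by the results of Roblin and Paulin--Pollicott--Schapira, when $m_F$ is finite one has $\mu_T\to \bar m_F=m_F/\|m_F\|$ in the vague topology (here the restriction to periodic orbits meeting the fixed compact set $\mathcal W$, whose interior meets a periodic orbit, is precisely what makes the sum nonempty and the vague-limit normalization come out to $1$ rather than to some proportion). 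Since $\bar m_F$ is a probability measure, vague convergence to $\bar m_F$ together with $\limsup_T \mu_T(T^1M)\le 1$ would already give narrow convergence; so the whole problem is to show that \emph{no mass escapes to infinity}, i.e. that $\mu_T$ is tight, or equivalently that $\mu_T(T^1M)\to 1$.

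The main step, and the main obstacle, is therefore the tightness estimate: for every $\e>0$ there is a compact $K\subset T^1M$ with $\mu_T(T^1M\setminus K)<\e$ for all large $T$. Here I would follow the strategy of \cite{PS16} and \cite{ST19}: one controls the $\mu_T$-mass of the complement of a large compact set by the amount of time a typical long closed geodesic (weighted by $F$) spends deep in the cusps / the nonrecurrent part of $T^1M$. Concretely, one fixes an exhaustion of $T^1M$ by compact sets $\mathcal W=K_0\subset K_1\subset\cdots$, and estimates the weighted number of closed orbits of length $\le T$ meeting $\mathcal W$ that spend a proportion $\ge\theta$ of their length outside $K_n$; using the mixing of the geodesic flow, the finiteness of $m_F$ (which forces the Gibbs measure of the ``far'' region to be small), and a shadowing/closing-lemma argument relating such excursions to orbit segments, one shows this weighted count is $o\big(e^{\delta_F T}/(\delta_F T)\big)$ once $n$ is large and $\theta$ small. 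Summing the geometric-type series over excursion depths yields the tightness. The delicate point is that, unlike the geometrically finite case, the cusp geometry is uncontrolled, so the excursion estimate must be purely dynamical/measure-theoretic, driven by the finiteness of $\|m_F\|$ and the ergodic-theoretic input of \cite{PS16,ST19} rather than by any explicit cusp model; keeping the error terms uniform in $T$ while letting the compact set grow is where the real work lies.

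Once tightness is established, the conclusion is immediate: vague convergence plus tightness is exactly narrow convergence, so $\mu_T\to \bar m_F$ in the dual of bounded continuous functions, proving \eqref{equation:equid}. Finally, Corollary 1.2 follows by testing \eqref{equation:equid} against the bounded continuous function $\mathbf 1$: the left-hand side becomes $\delta_F T\, e^{-\delta_F T}\sum_{p\in\mathcal P'_{\mathcal W}(T)} e^{\int_p F}$ and the right-hand side integrates to $1$, which is precisely \eqref{equation:couting}. (It is worth stressing in the write-up that testing against $\mathbf 1$ is legitimate \emph{only} because the convergence is narrow, not merely vague — this is the whole reason the strengthening is needed for counting.)
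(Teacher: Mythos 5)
Your derivation of the corollary is exactly the paper's: since $\tfrac{1}{\ell(p)}\mathcal{L}_p$ is a probability measure, testing the narrow convergence \eqref{equation:equid} against the bounded continuous function $\mathbf 1$ gives $\delta_F T\,e^{-\delta_F T}\sum_{p\in\mathcal{P}'_\mathcal{W}(T)}e^{\int_p F}\to 1$, which is \eqref{equation:couting}. You are also right to stress that this step is legitimate only because the convergence is narrow rather than vague — that is precisely the point of Theorem \ref{theo:equid}.
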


When $F=0$,  the exponent $\delta_0$ is exactly the  critical 
exponent $\delta_\Gamma$ of the group $\Gamma$ acting on $\widetilde{M}$, and the Gibbs measure $m_0$ is known as the Bowen-Margulis-Sullivan measure. 
As already mentioned above, when the curvature is pinched negative and the derivatives of the curvature are bounded, it is also the topological entropy of the geodesic flow, see \cite{OP}. Since our main result with $F=0$ is valid in the more general geometric setting of $CAT(-1)$-metric spaces, we restate it in this context.

\begin{theo}\label{theo:CAT(-1)}  Let $X$ be a $CAT(-1)$-metric space, and $\Gamma$ a discrete group of isometries acting properly on $X$. 
Assume that the geodesic flow of $X/\Gamma$ is topologically mixing and admits a finite Bowen-Margulis-Sullivan measure. 
Let $W\subset M$ be a compact set and $\mathcal{W}=T^1W$. 
Assume that  its interior intersects at least a periodic orbit of $(g^t)$. 
Then 
\begin{equation}
\delta_\Gamma T  e^{-\delta_\Gamma T} \sum_{p\in \mathcal{P}'_\mathcal{W}(T)} \frac{1}{\ell(p)}\mathcal{L}_p \to \frac{m_{BMS}}{\|m_{BMS}\|}\,  \quad\mbox{when}\quad T\to + \infty
\end{equation} in the narrow topology. 
\end{theo}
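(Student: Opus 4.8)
\medskip
\noindent\emph{Proof proposal.}
Theorem~\ref{theo:CAT(-1)} is the case $F\equiv 0$ of Theorem~\ref{theo:equid} (with $\delta_F=\delta_\Gamma$ and $m_F=m_{BMS}$), carried over to the $CAT(-1)$ framework in which the Patterson--Sullivan--Gibbs construction of \cite{PPS} and Roblin's mixing and \emph{vague} equidistribution theorems of \cite{Roblin} are available; so I describe the proof of \eqref{equation:equid} for a general H\"older potential $F$. Set
\[
\mu_T:=\delta_F T\,e^{-\delta_F T}\sum_{p\in\mathcal{P}'_\mathcal{W}(T)}e^{\int_p F}\,\tfrac{1}{\ell(p)}\mathcal{L}_p ,
\]
a flow-invariant measure whose total mass $\|\mu_T\|=\delta_F T\,e^{-\delta_F T}\sum_{p\in\mathcal{P}'_\mathcal{W}(T)}e^{\int_p F}$ is exactly the quantity in Corollary~\ref{coro:gibbs}. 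The input is the classical vague equidistribution of Roblin (see also \cite{PPS}): under topological mixing and finiteness of $m_F$ one has $\mu_T\to\bar m_F$ in the dual of $C_c(T^1M)$. Testing $\mu_T$ against nonnegative compactly supported functions increasing to $\mathbf 1$ gives the lower bound $\liminf_T\|\mu_T\|\ge\|\bar m_F\|=1$ for free. Conversely, a short measure-theoretic lemma shows that if finite measures converge vaguely to a finite measure and their total masses converge to that of the limit, then the convergence is automatically narrow. Hence the whole statement — the narrow equidistribution and, integrating the constant function $1$, the counting asymptotic of Corollary~\ref{coro:gibbs} — reduces to the single upper bound
\begin{equation}\label{eq:plan-upper}
\limsup_{T\to\infty}\ \delta_F T\,e^{-\delta_F T}\sum_{p\in\mathcal{P}'_\mathcal{W}(T)}e^{\int_p F}\ \le\ 1 .
\end{equation}

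To prove \eqref{eq:plan-upper} I would carry out Roblin's mixing-based counting of periodic orbits while keeping track of the mass escaping towards infinity. Fix a base point $o\in\pi(\mathcal{W})$. By a flow-box and closing argument, each primitive periodic orbit through $\mathcal{W}$ of length in $[n,n+1)$ is accounted for, with multiplicity bounded in terms of $\mathcal{W}$, by a conjugacy class of a primitive hyperbolic $\gamma\in\Gamma$ whose axis passes within $\diam(\mathcal{W})$ of $o$, with $d(o,\gamma o)\in[n-c,n+c]$ and with weight $e^{\int_p F}$ comparable up to a bounded factor to $\exp(\int_o^{\gamma o}\widetilde F)$. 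One then splits the count of such $\gamma$ according to whether the segment $[o,\gamma o]$ remains in the ball $B(o,R)$ or makes a deep excursion outside it. For the first type, the Shadow Lemma for the Gibbs density together with Roblin's mixing estimate for $(g^t,m_F)$ produces, after summation over $n\le T$, the expected quantity $\sim e^{\delta_F T}/(\delta_F T)$ with total weight $\|\bar m_F\|=1$, up to a factor $1+o_R(1)$ measuring the $m_F$-mass of a fixed compact piece. For the second type, the same Shadow Lemma / Poincar\'e-series estimate bounds the weighted count by a constant times $e^{\delta_F T}/(\delta_F T)$ times the $m_F$-measure of the set of vectors of a fixed compact set whose forward geodesic eventually leaves $B(o,R)$; because $m_F$ is \emph{finite}, this measure tends to $0$ as $R\to\infty$. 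Letting $T\to\infty$ and then $R\to\infty$ gives \eqref{eq:plan-upper}. When $M$ is geometrically finite the ``second type'' contributions are the classical excursions into finitely many cusps, estimated explicitly as in \cite{Roblin,PPS}; the issue here is to proceed without that structure.

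The hardest step will be precisely this last excursion estimate: turning the purely \emph{qualitative} finiteness of the Gibbs measure $m_F$ into a bound on the Gibbs/Poincar\'e-series weight of the group elements whose geodesic $[o,\gamma o]$ makes a long incursion into the noncompact part, \emph{uniformly} along a compact exhaustion of $T^1M$, and then feeding it into the closing-lemma count without the number of conjugacy-class representatives getting out of control — all in the absence of the combinatorial description of the thin part that geometric finiteness would provide. This is the technical core, and it is here that I would import and adapt the estimates of \cite{PS16} on Patterson--Sullivan--Gibbs measures of the noncompact part and of \cite{ST19} on orbital functions and mass at infinity. Granting it, \eqref{eq:plan-upper} follows, hence the narrow equidistribution of Theorem~\ref{theo:CAT(-1)} together with its counting corollary, and likewise Theorem~\ref{theo:equid} and Corollary~\ref{coro:gibbs} for a general potential.
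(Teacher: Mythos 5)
Your overall strategy --- vague equidistribution plus control of the mass escaping to infinity implies narrow convergence --- is the right one, and your reduction of the problem to the single mass bound $\limsup_T\|\mu_T\|\le 1$ (via the standard fact that vague convergence of positive measures together with convergence of total masses implies narrow convergence) is a legitimate alternative to the paper's reduction via tightness (Proposition \ref{prop:tight}). But there are two genuine gaps. First, you take as ``classical'' the vague convergence of the \emph{restricted} sum $\mu_T$ (over $\mathcal{P}'_\mathcal{W}(T)$ only) to $\bar m_F$; what Roblin and \cite{PPS} prove is vague convergence of the sum over \emph{all} of $\mathcal{P}'(T)$. For a test function supported away from $\mathcal{W}$ the two sums differ by the contribution of the (possibly infinitely many) periodic orbits missing $\mathcal{W}$, and showing that this difference vanishes is exactly Proposition \ref{prop:vague} of the paper; it is not free, and in particular your claim that $\liminf_T\|\mu_T\|\ge 1$ comes ``for free'' already relies on it.

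Second, and more seriously, the excursion estimate you defer is the technical core, and the mechanism you sketch for it would fail. You propose to bound the deep-excursion contribution by the $m_F$-measure of the set of vectors in a fixed compact set whose forward geodesic eventually leaves $B(o,R)$. In the geometrically infinite conservative setting that set has \emph{full} measure for every $R$ (by conservativity and ergodicity, almost every orbit leaves every ball and returns), so this quantity does not tend to $0$; and even if one replaces it by $m_F\bigl(T^1M\setminus T^1B(o,R)\bigr)$, which does tend to $0$, the resulting bound is essentially the statement to be proved, because what must be controlled is the \emph{length} of each excursion, not merely the measure of the set of orbits that make one. The ingredient that actually closes both Proposition \ref{prop:vague} and Proposition \ref{prop:tight} in the paper is the Pit--Schapira finiteness criterion (Theorem \ref{finiteness-criterion}): finiteness of $m_F$ is \emph{equivalent} to convergence of $\sum_{\gamma\in\Gamma_W} d(o,\gamma o)\,e^{\int_o^{\gamma o}(F-\delta_F)}$, where $\Gamma_W$ encodes the excursions outside $\Gamma W$ and the factor $d(o,\gamma o)$ records their lengths. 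The paper maps each offending periodic orbit (or each of its excursions) to an element of $\Gamma_W$ with bounded multiplicity, using Lemma \ref{lemm:useful}, the Shadow Lemma \ref{lemm:shadow} and Proposition \ref{prop:nice}, and bounds the bad sums by \emph{tails} of this convergent series, which vanish as $T\to\infty$ or $R\to\infty$. Without this quantitative criterion, or an equivalent replacement, your plan does not go through.
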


As a corollary, integrating the constant map equal to $1$, we get the following striking consequence of our work.

\begin{coro}\label{coro:striking} Let $M$ be a manifold with pinched negative curvature or a quotient  of a $CAT(-1)$-space, whose geodesic flow is topologically mixing. 
Assume that there exists a (finite) measure of maximal entropy, or equivalently that the
Bowen-Margulis-Sullivan measure is finite. Let $W\subset M$ be a compact set and $\mathcal{W}=T^1W$. 
Assume that its interior intersects at least a periodic orbit of $(g^t)$. 
Then 
\begin{equation}\label{equation:couting-zero-potentiel}  \#\,\mathcal{P}'_\mathcal{W}(T)   \sim \frac{e^{\delta_\Gamma T}}{\delta_\Gamma T}\,  \quad\mbox{when}\quad T\to + \infty\,.
\end{equation} 
\end{coro}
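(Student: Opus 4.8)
The plan is to deduce Corollary~\ref{coro:striking} from Theorem~\ref{theo:CAT(-1)} (and, in the pinched-curvature case, from Theorem~\ref{theo:equid} with $F=0$) exactly as Corollary~\ref{coro:gibbs} is deduced from Theorem~\ref{theo:equid}: by evaluating the converging sequence of measures on the constant function $\mathbf 1$. The only point requiring care is that $\mathbf 1$ is bounded but \emph{not} compactly supported, which is precisely why the narrow (rather than merely vague) convergence proved above is needed. First I would recall that narrow convergence $\mu_T \to \mu$ of finite positive measures means $\int \phi\, d\mu_T \to \int \phi\, d\mu$ for every $\phi \in C_b(T^1M)$; taking $\phi \equiv 1$ gives in particular $\|\mu_T\| \to \|\mu\|$.

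Apply this to $\mu_T = \delta_\Gamma T e^{-\delta_\Gamma T} \sum_{p\in\mathcal P'_\mathcal W(T)} \frac{1}{\ell(p)}\mathcal L_p$, whose narrow limit is $\bar m_{BMS} = m_{BMS}/\|m_{BMS}\|$, a probability measure of total mass $1$. Since $\mathcal L_p$ has total mass $\ell(p)$, we have $\|\frac{1}{\ell(p)}\mathcal L_p\| = 1$, hence $\|\mu_T\| = \delta_\Gamma T e^{-\delta_\Gamma T}\, \#\,\mathcal P'_\mathcal W(T)$. Narrow convergence then yields
\begin{equation*}
\delta_\Gamma T e^{-\delta_\Gamma T}\, \#\,\mathcal P'_\mathcal W(T) \longrightarrow 1 \quad\text{as } T\to+\infty,
\end{equation*}
which is exactly \eqref{equation:couting-zero-potentiel}. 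The hypotheses of Theorem~\ref{theo:CAT(-1)} are met under the assumptions of the corollary: topological mixing is assumed, finiteness of $m_{BMS}$ is assumed (equivalently, existence of a measure of maximal entropy, by Otal--Peigné in the pinched case), and $\mathcal W = T^1W$ has interior meeting a periodic orbit. One also uses that, under these assumptions, $\delta_\Gamma = \delta_0 > 0$ (a mixing geodesic flow with finite $m_{BMS}$ cannot have zero entropy), so the normalisation $\delta_\Gamma T$ makes sense; in the pinched-curvature manifold case one invokes Theorem~\ref{theo:equid} with $F=0$ instead, giving the same conclusion.

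There is essentially no obstacle here beyond bookkeeping: the entire analytic content is contained in the narrow equidistribution theorems, and the corollary is the routine observation that narrow convergence \emph{preserves total mass}, which vague convergence does not. If one wanted to be scrupulous, the one thing to double-check is that the $\mathcal L_p$ are genuinely \emph{positive} measures (they are, being length measures along closed geodesics) so that no cancellation occurs and the total-mass identity $\|\mu_T\| = \sum_{p} \delta_\Gamma T e^{-\delta_\Gamma T}$ holds term by term; this is immediate from the definitions given in the introduction.
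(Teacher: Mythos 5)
Your proof is correct and follows exactly the paper's (one-line) argument: the corollary is obtained by integrating the constant function $1$ against the narrowly convergent measures of Theorem \ref{theo:CAT(-1)} (or Theorem \ref{theo:equid} with $F=0$), using that each $\frac{1}{\ell(p)}\mathcal{L}_p$ is a probability measure so the total mass of the sum is $\delta_\Gamma T e^{-\delta_\Gamma T}\,\#\,\mathcal{P}'_\mathcal{W}(T)$. Your additional remarks on positivity of $\delta_\Gamma$ and positivity of the measures are sound bookkeeping and consistent with the paper.
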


The results above are completely new in the geometrically infinite setting. We refer to \cite{ST19} (resp. \cite{STdufutur}) for several classes of geometrically infinite manifolds (resp.  potentials) satisfying the so-called {\em Strongly Positively Recurrent (SPR) property}, which implies the finiteness of the Bowen-Margulis-Sullivan measure (resp. of the associated Gibbs measure). 
In particular, there are wide classes of examples of manifolds / potentials satisfying the assumptions of Theorems \ref{theo:equid} or \ref{theo:CAT(-1)} and their corollaries. 

The asymptotic counting given in our last corollary  is due to Margulis \cite{Margulis} on compact negatively curved manifolds. On geometrically finite spaces, it is due to \cite{Roblin} when $F=0$  in the  $CAT(-1)$-setting. For general potentials on geometrically finite manifolds with pinched negative curvature, it had been shown in \cite{PPS}. 

Theorem \ref{theo:equid} and Corollary \ref{coro:gibbs} are also announced in \cite{Velozo} under the (more restrictive) assumptions that $F$ is a H\"older potential satisfying the SPR property and converging to $0$ at infinity. It seems to us that his (interesting) approach cannot work for general potentials. 

\medskip

The restriction to $\mathcal{P}'_W$ instead of $\mathcal {P}'$ is intrinsic to the noncompact geometrically  infinite case. 
Indeed, except in the geometrically finite case, where both sets typically coincide for $W$ large enough (containing the compact part of the manifold), $\mathcal{P}'_W(T)$ is 
 a finite set, whereas $\mathcal{P}'(T)$ could easily often be infinite. 

The assumption of finiteness of the measure in Theorems \ref{theo:equid} and \ref{theo:CAT(-1)} is unavoidable. Indeed, it is proven in \cite{Roblin, PPS} that the sum over all periodic orbits of $\mathcal{P}'(t)$ vaguely converges to $0$, and the sums considered in both Theorems \ref{theo:equid} and \ref{theo:CAT(-1)} are obviously smaller, and therefore converge also vaguely to $0$. However, in some situations, it can happen that counting results similar to  Corollary \ref{coro:striking} with different asymptotics hold, as in \cite[Chapter 7]{Vidotto}. 
This infinite measure situation is still widely unexplored.


\section{Thermodynamical formalism of the geodesic flow}

\subsection{Negative curvature}

Let $M$ be a non compact manifold, with negative sectional curvature satisfying $\kappa\le -a^2<0$ everywhere. 
In the sequel, we assume $M$ be nonelementary, i.e. there are at least two distinct closed geodesics on $M$ (and therefore an infinity). 

Let  $\Gamma=\pi_1(M)$ be its fundamental group, $\widetilde M$ be its universal cover, and $\partial\widetilde M$ its boundary at infinity. 
Denote by $\pi:T^1\widetilde M\to \widetilde M$ the canonical projection, and $p_\Gamma:\widetilde M\to M$ the quotient map. 

The Busemann cocycle is defined on $\partial \widetilde M\times \widetilde M\times\widetilde M$ by
$$
\beta_\xi(x,y)=\lim_{z\to \xi} d(x,z)-d(y,z)\,.
$$

The unit tangent bundle $T^1\widetilde M$ of $\widetilde M$ is homeomorphic to 
$\partial^2\widetilde M:=\partial\tilde M\times \partial \widetilde M \setminus \mbox{Diagonal}$ through the 
well known {\em Hopf coordinates}\,:
$$
v\mapsto (v^-,v^+,\beta_{v^+}(o,\pi(v)))\,,
$$ 
where $o$ is an arbitrary fixed point chosen once for all. 

The geodesic flow on $T^1\widetilde M$ or $T^1M$ is denoted by $(g^t)_{t\in\mathbb{R}}$. In the above coordinates, it acts by translation on the real factor. 

The action of $\Gamma$ can be expressed in these coordinates as follows\,:
$$
\gamma(v^-,v^+,t)=(\gamma v^-,\gamma v^+,t+\beta_{v^+}(\gamma^{-1}o,o))\,.
$$ 

Therefore, any invariant measure $m$ under the geodesic flow on $T^1M$ can be lifted into an invariant measure $\tilde m$ which, in these coordinates, can be written 
$\tilde m=\mu\times dt$ where $\mu$ is a $\Gamma$-invariant measure on $\partial^2\widetilde M$ and $dt$ is the Lebesgue measure on $\bbR$. \\

As said in the introduction, we denote by 
\begin{itemize}
\item $\mathcal{P}$ (resp. $\mathcal{P}'$) the set of  periodic orbits (resp. primitive periodic orbits) of $(g^t)$ on $T^1M$;
\item  $\mathcal{P}(t)$ (resp. 
$\mathcal{P}'(t)$) the set of (primitive) periodic orbits of length at most $t$. 
\item $\mathcal{P}(t_1,t_2)$  (resp. 
$\mathcal{P}'(t_1,t_2)$) the set of (primitive) periodic orbits of length in $(t_1,t_2]$, 
\end{itemize}
When $M$ is noncompact, all these sets can be infinite. We therefore consider only periodic orbits intersecting a given compact set $\mathcal{W}$. 
We denote by $\mathcal{P}_\mathcal{W}, \mathcal{P}'_\mathcal{W}, \mathcal{P}_\mathcal{W}(t), \mathcal{P}'_\mathcal{W}(t),\mathcal{P}_\mathcal{W}(t_1,t_2), \mathcal{P}'_\mathcal{W}(t_1,t_2)$ 
the corresponding sets.

If $p$ is a periodic orbit of the geodesic flow on $T^1M$, we denote by $\ell(p)$ its period, and $\mathcal{L}_p$ the Lebesgue measure along $p$. 

\subsection{Pressure and Gibbs measures}

In this note, we consider equidistribution properties of periodic orbits of the geodesic flow towards Gibbs measures. Let us recall briefly the necessary background on these measures. 
We refer to \cite{PPS} and \cite{PS16} for more details. 

Let $F:T^1M \to\bbR$ be a H\"older continuous map, we still denote by $F$ its $\Gamma$-invariant lift to $T^1\widetilde M$. 
The following property, called {\em Bowen property} in many references as \cite{CT}, \cite{BCFT}, and {\em (HC)-type property} in \cite{BPP} is crucial in 
all estimates. It is a direct consequence of \cite[Lemma 3.2]{PPS}.  

\begin{lemm}\label{lemm:useful} Let $F:T^1\widetilde M\to \bbR$ be a H\"older map. 
For all $D>0$ and $x,y\in\widetilde M$, there exists $C>0$ depending on $D$, on the upperbound of the curvature, on the H\"older constants of $F$ and on $\sup_{B(x,D)}|F|$ and 
$\sup_{B(y,D)}|F|$  such that 
for all $x',y'$ in $\widetilde M$ with $d(x,x')\le D$, $d(y,y')\le D$, 
$$
\left|\int_x^y F-\int_{x'}^{y'}F\right|\le C\,.
$$
\end{lemm}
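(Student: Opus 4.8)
The plan is to prove Lemma~\ref{lemm:useful} by reducing it to the quantitative comparison estimate \cite[Lemma 3.2]{PPS}, which controls the difference of the integrals of a H\"older potential along two geodesic segments that stay uniformly close to each other, in terms of the H\"older data of $F$ and the local sup-norm of $F$ along a neighbourhood of the segments. The point of the present statement is simply to absorb the possibility that the endpoints are perturbed by at most $D$, and to record explicitly that the constant $C$ depends only on $D$, the curvature upper bound, the H\"older constants of $F$, and $\sup_{B(x,D)}|F|$ and $\sup_{B(y,D)}|F|$.

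First I would fix $x,y\in\widetilde M$ and $x',y'$ with $d(x,x')\le D$ and $d(y,y')\le D$, and let $[x,y]$ and $[x',y']$ denote the geodesic segments between the respective endpoints. By convexity of the distance function in nonpositive curvature (here $\kappa\le -a^2<0$), every point of $[x',y']$ lies within distance at most $D$ of the corresponding point of $[x,y]$ (travelled at proportional speed); more precisely, the Hausdorff distance between the two segments is at most $D$, and the fellow-traveller property holds pointwise after affine reparametrisation. In particular both segments lie in a $D$-neighbourhood of $[x,y]$, which in turn lies in $B(x,d(x,y)+D)\cup B(y,d(x,y)+D)$, but for the $\sup|F|$ bound we only need the $D$-balls around the endpoints together with the tube around $[x,y]$ itself; the H\"older control of $F$ then transfers the sup-norm bound along the whole tube from the endpoint data. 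Then I would apply \cite[Lemma 3.2]{PPS} to the pair of fellow-travelling segments $[x,y]$ and $[x',y']$: that lemma yields $\bigl|\int_x^y F-\int_{x'}^{y'}F\bigr|\le C$ with $C$ of the claimed form, once one checks that the quantities \cite[Lemma 3.2]{PPS} depends on (the closeness parameter, the H\"older exponent and constant of $F$, and the relevant local sup-norms) are all dominated by $D$, the curvature bound, the H\"older data, and $\sup_{B(x,D)}|F|,\sup_{B(y,D)}|F|$.

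The only genuinely delicate point is the bookkeeping of which sup-norm of $F$ enters: a priori \cite[Lemma 3.2]{PPS} wants a bound on $|F|$ along the segments (a set whose size grows with $d(x,y)$), whereas the statement here only allows dependence on $\sup_{B(x,D)}|F|$ and $\sup_{B(y,D)}|F|$. The resolution, and the step I expect to require the most care, is that \cite[Lemma 3.2]{PPS} is really an estimate on the \emph{difference} of two integrals along nearby segments, so the large common part of the two segments cancels and what survives only involves $F$ near the endpoints, where the two segments can genuinely diverge by up to $D$; thus the $D$-balls around $x$ and $y$ suffice. Once this is unwound, the remaining verification is routine: track the constants through \cite[Lemma 3.2]{PPS}, use the triangle inequality to pass from $[x,y]$ versus $[x',y']$ through the intermediate segment $[x,y']$ (perturbing one endpoint at a time), and collect the two resulting constants into a single $C=C(D,a,F)$. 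This completes the proof.
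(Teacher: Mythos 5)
Your proposal is correct and follows exactly the route the paper takes: the paper gives no independent argument and simply records the lemma as a direct consequence of \cite[Lemma 3.2]{PPS}, which is precisely the reduction you carry out (your discussion of fellow-travelling and cancellation of the common middle part is the mechanism inside that cited lemma, not an extra step needed here). One small caution: your parenthetical claim that H\"older continuity ``transfers the sup-norm bound along the whole tube from the endpoint data'' is not literally true over a long segment, but it is also not needed, since, as you then correctly observe, only the behaviour of $F$ near the endpoints survives in the difference of the two integrals.
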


The series $\displaystyle \sum_{\gamma\in \Gamma}e^{\int_o^{\gamma o} F-sd(o,\gamma o)}$ has a critical exponent $\delta_F$. 
This exponent, when finite,  coincides with the pressure of $F$ when the manifold $M$ has pinched negative curvature and bounded derivatives of the curvature, see \cite{OP,PPS}.
 
By the obvious relation $\delta_{F+c}=\delta_F+c$ for any constant $c\in\bbR$, we can easily assume that $\delta_F>0$ as soon as it is finite. 

\medskip

A shadow $\mathcal{O}_x(B(y,R))$, for $x,y\in\widetilde M$, is the set of points $z\in\widetilde M\cup\partial\widetilde M$, such that the geodesic line from $x$ to $z$
intersects the ball $B(y,R)$.

The Patterson-Sullivan-Gibbs construction gives a measure $\nu_o^F$ on the boundary $\partial\widetilde M$, satisfying the following \emph{Sullivan Shadow Lemma}. It was first shown on hyperbolic manifolds for $F=0$ by Sullivan in \cite{Sull}, and is due to Mohsen \cite{Mohsen} for general potential when $\Gamma$ is cocompact. See \cite[lemma 3.10]{PPS} for a proof in general. 

\begin{lemm}[Shadow Lemma]\label{lemm:shadow} Let $F:T^1M\to \bbR$ be a H\"older-continuous map with finite critical exponent $\delta_F$, $\widetilde F:T^1\widetilde M\to \bbR$ its lift, 
 and $\nu_o^F$ be the measure on $\partial\widetilde M$ given by the Patterson-Sullivan-Gibbs construction. 
There exists $R>0$, such that for  all $r\ge R$, there exists  $C>0$ such that for all $\gamma \in \Gamma$, 
$$
\frac{1}{C}\,e^{\int_o^{\gamma o}(F-\delta_F)}\le \,\nu_o^F(\mathcal{O}_o(B(\gamma o,r))\le Ce^{\int_o^{\gamma o}(F-\delta_F)} \,.
$$
\end{lemm}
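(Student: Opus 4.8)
To prove this I would run Sullivan's classical argument, in the form carried out for general potentials in \cite{PPS}. The two structural inputs are: (i) the measure $\nu_o^F$ is the basepoint-$o$ member of a $\Gamma$-equivariant $(\delta_F,F)$-conformal density $(\nu_x^F)_{x\in\widetilde M}$ on $\partial\widetilde M$, supported on the limit set $\Lambda\Gamma$, with $\gamma_*\nu_x^F=\nu_{\gamma x}^F$ and $\frac{d\nu_x^F}{d\nu_y^F}(\xi)=e^{-C^{F-\delta_F}_\xi(x,y)}$, where $C^{\psi}_\xi(x,y)=\lim_{z\to\xi}(\int_y^z\psi-\int_x^z\psi)$ is the Gibbs cocycle; and (ii) this cocycle is controlled along shadowed geodesics by the Bowen/(HC)-type property: if $[o,\xi)$ passes within $r$ of $\gamma o$, then $C^{F-\delta_F}_\xi(o,\gamma o)=-\int_o^{\gamma o}(F-\delta_F)+O(r)$, where --- and this is the crucial point --- the $O(r)$ is \emph{uniform in $\gamma$}, because $F$ is $\Gamma$-invariant on $T^1\widetilde M$ so every local supremum $\sup_{B(\gamma o,D)}|F|$ entering Lemma~\ref{lemm:useful} equals $\sup_{B(o,D)}|F|$.

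\textbf{Step 1 (reduction to unit shadows).} Since $\gamma^{-1}\mathcal{O}_o(B(\gamma o,r))=\mathcal{O}_{\gamma^{-1}o}(B(o,r))$, the change-of-basepoint formula together with the estimate above gives
\[
\nu_o^F\big(\mathcal{O}_o(B(\gamma o,r))\big)=\int_{\mathcal{O}_o(B(\gamma o,r))}e^{-C^{F-\delta_F}_\xi(o,\gamma o)}\,d\nu_{\gamma o}^F(\xi)\ \asymp_r\ e^{\int_o^{\gamma o}(F-\delta_F)}\;\nu_{\gamma o}^F\big(\mathcal{O}_o(B(\gamma o,r))\big),
\]
with $\asymp_r$ uniform in $\gamma$; and by equivariance $\nu_{\gamma o}^F(\mathcal{O}_o(B(\gamma o,r)))=\nu_o^F(\mathcal{O}_{\gamma^{-1}o}(B(o,r)))$. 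So it is enough to find $R>0$ and $0<c\le C$ such that $c\le\nu_o^F(\mathcal{O}_x(B(o,r)))\le C$ for every $x\in\Gamma o$ and every $r\ge R$.

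\textbf{Step 2 (the uniform estimate for unit shadows).} The upper bound is trivial: $\le\|\nu_o^F\|$. For the lower bound, recall that if $\xi_x$ denotes the endpoint of the ray from $o$ through $x$, then thin triangles give $\{\eta:(\xi_x\,|\,\eta)_o<R-C_0\}\subseteq\mathcal{O}_x(B(o,R))$ with $C_0$ depending only on the curvature bound; hence
\[
\nu_o^F\big(\mathcal{O}_x(B(o,R))\big)\ \ge\ \|\nu_o^F\|-\sup_{\xi\in\partial\widetilde M}\nu_o^F\big(\{\eta:(\xi\,|\,\eta)_o\ge R-C_0\}\big).
\]
Now $\xi\mapsto\nu_o^F(\{\eta:(\xi\,|\,\eta)_o\ge t\})$ is upper semicontinuous on the compact $\partial\widetilde M$ and decreases, as $t\to\infty$, to $\nu_o^F(\{\xi\})$; since $\Gamma$ is non-elementary, $\operatorname{supp}\nu_o^F=\Lambda\Gamma$ is infinite, so no single point carries all the mass of $\nu_o^F$, and a compactness/upper-semicontinuity argument gives $\sup_{\xi}\nu_o^F(\{\eta:(\xi\,|\,\eta)_o\ge t\})\to\sup_\xi\nu_o^F(\{\xi\})<\|\nu_o^F\|$. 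Choosing $R$ so that this supremum is $\le\|\nu_o^F\|-c$ at $t=R-C_0$ (for some $c>0$) yields $\nu_o^F(\mathcal{O}_x(B(o,r)))\ge\nu_o^F(\mathcal{O}_x(B(o,R)))\ge c$ for all $x\in\Gamma o$, $r\ge R$; together with Step~1 this proves the lemma.

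\textbf{Main obstacle.} The genuine content sits in the uniform lower bound of Step~2 --- showing that the ``blind spot'' of the shadow $\mathcal{O}_x(B(o,r))$, which lies in the direction along which $x$ runs to infinity, cannot absorb almost all of the mass of $\nu_o^F$; this uses only non-elementarity (no point carries the full mass of $\nu_o^F$) together with compactness of $\partial\widetilde M$. The other, more bureaucratic, difficulty is tracking uniformity in $\gamma$: every passage through Lemma~\ref{lemm:useful} in Step~1 costs a multiplicative constant, and one must check these do not blow up with $\gamma$ --- which they do not, precisely because after translating by $\gamma$ all the local suprema of $|F|$ that occur are taken over balls around the fixed point $o$.
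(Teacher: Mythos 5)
The paper does not prove this lemma itself but cites \cite[Lemma 3.10]{PPS}, and your argument is a correct reconstruction of exactly that standard Sullivan--Mohsen proof: reduce via the cocycle/equivariance to shadows of a fixed ball seen from far away, then rule out the mass escaping into the ``blind spot'' using compactness of $\partial\widetilde M$ and the fact that $\nu_o^F$ is not concentrated at a single point. Your explicit remark on why the constants from Lemma~\ref{lemm:useful} are uniform in $\gamma$ (by $\Gamma$-invariance of $F$) correctly identifies the only place where uniformity needs checking.
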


A nice consequence of the Shadow Lemma is the following proposition, that we will use in the proof of Theorem \ref{theo:equid} and which can be useful for other purposes . 

\begin{prop}\label{prop:nice}  With the above notations, for all $c>0$, there exists a constant $k>0$ such that for all $\alpha\in\Gamma$, all $r\ge R$ and all $T>0$, one has
$$
\sum_{\tiny \begin{array}{c}
\gamma\in \Gamma,\gamma o \in \mathcal{O}_o(B(\alpha o,r))\\
d(o,\gamma o)\in [T,T+c]\end{array}}e^{\int_o^{\gamma o} (F-\delta_F)} \le 
k e^{\int_o^{\alpha o} (F-\delta_F)}\,.
$$
The reverse inequality (with different constant) holds when $\Gamma$ acts cocompactly on $\widetilde M$.
\end{prop}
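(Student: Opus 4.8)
The plan is to group the elements $\gamma \in \Gamma$ appearing in the sum according to the shadow of a fixed small ball near $\gamma o$, and to estimate the total measure of these shadows using the Shadow Lemma. First I would fix $r \ge R$ and consider the set $S$ of $\gamma \in \Gamma$ with $\gamma o \in \mathcal{O}_o(B(\alpha o, r))$ and $d(o,\gamma o) \in [T, T+c]$. For each such $\gamma$, the shadow $\mathcal{O}_o(B(\gamma o, r'))$ for a suitable fixed $r' \ge R$ is a subset of $\partial \widetilde M$; by the Shadow Lemma its $\nu_o^F$-measure is comparable to $e^{\int_o^{\gamma o}(F-\delta_F)}$, which is exactly the summand. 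So it suffices to show that $\sum_{\gamma \in S} \nu_o^F\big(\mathcal{O}_o(B(\gamma o, r'))\big) \lesssim e^{\int_o^{\alpha o}(F-\delta_F)}$, and by the Shadow Lemma again the right-hand side is comparable to $\nu_o^F\big(\mathcal{O}_o(B(\alpha o, r''))\big)$ for suitable $r''$. Thus the statement reduces to a bounded-multiplicity (bounded-overlap) claim: the shadows $\mathcal{O}_o(B(\gamma o, r'))$, $\gamma \in S$, cover each point of $\partial \widetilde M$ at most $N$ times for some $N = N(r,r',c,a)$ independent of $\alpha$ and $T$, and they are all contained in a single shadow $\mathcal{O}_o(B(\alpha o, r''))$ (up to enlarging constants).

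The containment part is geometric and essentially routine in $CAT(-1)$-type geometry: if $\gamma o$ lies in the shadow of $B(\alpha o, r)$ seen from $o$ and $d(o, \gamma o) \ge d(o,\alpha o)$ up to an additive constant (which follows from $d(o,\gamma o) \ge T$ once one checks $d(o,\alpha o) \le T + O(1)$ on the support of the sum, or rather restricts attention to the relevant $\alpha$), then the geodesic ray from $o$ through $\gamma o$ passes within bounded distance of $\alpha o$, so any geodesic ray from $o$ hitting $B(\gamma o, r')$ also hits $B(\alpha o, r'')$ for $r''$ depending only on $r, r', a$ and the additive constants; this uses the thin-triangles / convexity of the metric in negative curvature. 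The bounded-overlap part is the crux: I would argue that if a single boundary point $\xi$ lies in $\mathcal{O}_o(B(\gamma_1 o, r'))$ and $\mathcal{O}_o(B(\gamma_2 o, r'))$ with $d(o,\gamma_i o) \in [T, T+c]$, then $\gamma_1 o$ and $\gamma_2 o$ both lie within bounded distance of the geodesic ray $[o,\xi)$ at parameter $\approx T$, hence $d(\gamma_1 o, \gamma_2 o) \le L$ for some $L = L(r',c,a)$; since $\Gamma$ acts properly discontinuously, the number of $\gamma \in \Gamma$ with $d(\gamma o, \gamma_1 o) \le L$ is bounded by a constant $N = N(L)$ independent of everything else.

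Putting these together gives $\sum_{\gamma \in S} e^{\int_o^{\gamma o}(F-\delta_F)} \le C \sum_{\gamma \in S} \nu_o^F(\mathcal{O}_o(B(\gamma o, r'))) \le C N\, \nu_o^F\big(\mathcal{O}_o(B(\alpha o, r''))\big) \le C' N\, e^{\int_o^{\alpha o}(F-\delta_F)}$, where all constants depend only on $c$, $r$, the curvature bound $a$, and the H\"older data of $F$ (the dependence on $\sup_{B(\cdot,D)}|F|$ entering through Lemma \ref{lemm:useful} when comparing $\int_o^{\gamma o} F$ along nearby geodesics). The main obstacle I expect is the bounded-overlap estimate, specifically making the geometric claim "both $\gamma_i o$ lie near $[o,\xi)$ at comparable parameters, hence are close to each other" fully rigorous with constants genuinely uniform in $\alpha$ and $T$ — one must be careful that the window $[T, T+c]$ (rather than an exact sphere) does not let the two points drift apart, and that enlarging shadow radii does not introduce a hidden dependence on $T$. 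For the reverse inequality in the cocompact case, one uses that shadows $\mathcal{O}_o(B(\gamma o, r))$ for $r$ large and $d(o,\gamma o) \in [T,T+c]$ actually cover $\mathcal{O}_o(B(\alpha o, r_0))$ (with smaller $r_0$) by a standard covering argument exploiting cocompactness, and then the lower bound in the Shadow Lemma gives each summand a definite size, yielding the matching lower bound.
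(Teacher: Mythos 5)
Your proposal is correct and follows essentially the same route as the paper: convert each summand to a shadow measure via the Shadow Lemma, use bounded multiplicity of the overlapping shadows (from properness of the $\Gamma$-action) to replace the sum by the measure of the union, observe that the union sits inside a slightly enlarged shadow of $B(\alpha o,\cdot)$, and apply the Shadow Lemma once more; the cocompact lower bound via covering is also the paper's argument. The bounded-overlap step you flag as the crux is exactly what the paper invokes (in one sentence), and your justification of it is the right one.
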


\begin{proof} By the Shadow Lemma (Lemma \ref{lemm:shadow}), the above sum is comparable, up to constants, to 
$$
\sum_{\tiny \begin{array}{c}
\gamma\in \Gamma,\gamma o \in \mathcal{O}_o(B(\alpha o,r))\\
d(o,\gamma o)\in [T,T+c]\end{array}}\nu_o^F(\mathcal{O}_o(B(\gamma o,r)).
$$
As $\Gamma$ acts properly on $\widetilde M$, the multiplicity of an intersection of such shadows is uniformly bounded. Therefore, the latter sum is comparable, up to 
constants, to 
$$
\nu_o^F\left(\bigcup_{\tiny \begin{array}{c}
\gamma\in \Gamma,\gamma o \in \mathcal{O}_o(B(\alpha o,r))\\
d(o,\gamma o)\in [T,T+c]\end{array}}\mathcal{O}_o(B(\gamma o,r))\right).
$$
As this union is included in $\mathcal{O}_o(B(\alpha o,2r))$, it is bounded from above by $\nu_o^F(\mathcal{O}_o(B(\alpha o,r)))$. 
A final application of the Shadow Lemma \ref{lemm:shadow} gives the desired upper bound. 

When $\Gamma$ acts cocompactly on $\widetilde M$, the above union covers $\mathcal{O}_o(B(\alpha o,r))$ so that, once again, the Shadow Lemma gives the desired lower bound. 
\end{proof}


\subsection{Finiteness criterion for Gibbs measures}

Through the Hopf coordinates, one defines a measure $\tilde m_F$ equivalent to $\nu_o^{\check{F}}\times \nu_o^F\times dt$ on $\partial^2\widetilde M\times\bbR\simeq T^1\widetilde M$, where $\check{F}(v):=F(-v)$, which is $\Gamma$-invariant and invariant under the geodesic flow; see \cite[Chapter 3]{PPS} for a precise construction.
The induced measure $m_F$ on the quotient, when finite, is the {\em Gibbs measure associated to $F$} involved in Theorem \ref{theo:equid}.

It is well known (Hopf-Tsuji-Sullivan-Gibbs Theorem) that $m_F$ is ergodic and conservative if and only if the series
$\displaystyle \sum_{\gamma\in \Gamma}e^{\int_o^{\gamma o} F-sd(o,\gamma o)}$ diverges at the critical exponent $\delta_F$, see \cite[Theorem 5.4]{PPS}. 

Let us recall the finiteness criterion shown in \cite{PS16}. For geometrically finite manifolds, it had been previously shown in \cite{DOP} for $F = 0$ and in \cite{Coudene, PPS} for general potentials.

If $W$ is a compact set of $\widetilde M$, we define $\Gamma_W$ as 
\begin{equation}
\Gamma_W=\{\gamma\in \Gamma,\exists x,y\in W, [x,\gamma y]\cap \Gamma W\subset W\cup \gamma W\}
\end{equation}

\begin{theo}[\cite{PS16},\cite{CDST}] \label{finiteness-criterion} Let $M$ be a negatively curved manifold with sectional curvature satisfying $\kappa\le -a^2<0$.
  Let $F:T^1M \to \bbR$ be a H\"older continuous map with finite critical exponent $\delta_F$. 
The measure $m_F$ is finite if and only if  it is ergodic and conservative, and there exists some compact set $W\subset \widetilde M$ whose interior intersects at least a closed geodesic, 
such that 
$$
\sum_{\gamma\in \Gamma_W} d(o,\gamma o)\,e^{\int_o^{\gamma o} (F-\delta_F)} <+\infty\,.
$$
\end{theo}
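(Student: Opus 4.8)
The plan is to reduce both implications of the theorem to the two-sided estimate
\begin{equation}\label{eq:finiteness-key}
\|m_F\|\ \asymp\ 1\ +\ \sum_{\gamma\in\Gamma_W} d(o,\gamma o)\,e^{\int_o^{\gamma o}(F-\delta_F)},
\end{equation}
valid for any compact $W\subset\widetilde M$ whose interior meets a closed geodesic, provided $m_F$ is ergodic and conservative. Granting \eqref{eq:finiteness-key}, the ``if'' part is immediate. For the ``only if'' part, if $m_F$ is finite then it is conservative by the Poincaré recurrence theorem, hence the Poincaré series diverges at $\delta_F$ and $m_F$ is moreover ergodic by the Hopf--Tsuji--Sullivan--Gibbs dichotomy (\cite[Theorem 5.4]{PPS}); since $M$ is nonelementary we may fix a compact $W$ whose interior meets a closed geodesic, and \eqref{eq:finiteness-key} then forces $\sum_{\gamma\in\Gamma_W}d(o,\gamma o)e^{\int_o^{\gamma o}(F-\delta_F)}<+\infty$.

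To prove \eqref{eq:finiteness-key}, let $\mathcal{W}_0=T^1p_\Gamma(W)\subset T^1M$, a compact set. Since the support of $m_F$ contains every closed geodesic, the hypothesis on $W$ gives $0<m_F(\mathcal{W}_0)<+\infty$. Pick a codimension-one cross-section $\Sigma$ in the interior of $\mathcal{W}_0$, transverse to $(g^t)$, with induced finite measure $m_\Sigma$; by ergodicity its flow-saturation has full measure, and writing $\sigma\colon\Sigma\to(0,+\infty)$ for the first-return time (finite $m_\Sigma$-a.e. by conservativity) we have $\|m_F\|=\int_\Sigma\sigma\,dm_\Sigma$. By conservativity, $m_\Sigma$-almost every $v\in\Sigma$ returns, and we may partition $\Sigma$ modulo $m_\Sigma$-null sets as $\Sigma=\bigsqcup_\gamma\Sigma_\gamma$, where $\Sigma_\gamma$ is the set of $v$ whose lifted geodesic, starting in $W$, first re-enters $\Gamma W$ through the translate $\gamma W$; only indices $\gamma\in\Gamma_W$ occur, by the very definition of $\Gamma_W$. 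On $\Sigma_\gamma$ one has $\sigma(v)\asymp d(o,\gamma o)$ up to an additive error controlled by $\diam W$ and the curvature bound, so $\|m_F\|\asymp 1+\sum_{\gamma\in\Gamma_W}d(o,\gamma o)\,m_\Sigma(\Sigma_\gamma)$, the ``$1$'' absorbing the finitely many indices with $d(o,\gamma o)$ bounded. Finally, in Hopf coordinates $\tilde m_F$ is equivalent, with Radon--Nikodym derivative bounded above and below over $\mathcal{W}_0$, to $\nu_o^{\check F}\times\nu_o^F\times dt$; for $v\in\Sigma_\gamma$ the endpoint $v^+$ lies in a shadow $\mathcal{O}_o(B(\gamma o,r))$ with $r\asymp\diam W$, while the $v^-$ and $t$ coordinates stay in fixed sets of bounded measure. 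The Shadow Lemma (Lemma \ref{lemm:shadow}) then gives $m_\Sigma(\Sigma_\gamma)\lesssim e^{\int_o^{\gamma o}(F-\delta_F)}$, which, combined with the previous display and with conservativity (so that no mass escapes the excursion decomposition), yields the upper bound in \eqref{eq:finiteness-key}. The reverse inequality follows once one shows that for each $\gamma\in\Gamma_W$ a uniformly positive proportion of the geodesics with $v^+\in\mathcal{O}_o(B(\gamma o,r))$, and $v^-$ in a fixed good set, actually make a clean excursion to $\gamma W$ and back, i.e. $m_\Sigma(\Sigma_\gamma)\gtrsim e^{\int_o^{\gamma o}(F-\delta_F)}$.

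I expect this last point to be the main obstacle: one must show, with constants independent of $\gamma\in\Gamma_W$, that the corridor from $W$ to $\gamma W$ kept free of other translates of $W$ --- exactly what the condition $\gamma\in\Gamma_W$ records --- carries a definite proportion of the Gibbs mass transverse to the flow. This is where the assumption that the interior of $W$ meets a closed geodesic is genuinely used, as it supplies the needed local thickness of the support of $m_F$ inside $\Sigma$; this geometric analysis of excursions is the core of \cite{PS16} (and is revisited in \cite{CDST}). The surrounding measure theory --- the cross-section and Kac formula, and the uniform bound on the multiplicity of the shadows $\mathcal{O}_o(B(\gamma o,r))$ as in Proposition \ref{prop:nice} --- is then routine.
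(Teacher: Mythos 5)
This statement is not proved in the paper at all: it is quoted from \cite{PS16} (and \cite{CDST}), so the only comparison possible is with the argument of those references. Your strategy is in fact the same as theirs: decompose the recurrent part of the flow into excursions away from a fixed compact piece, index the excursions by the elements of $\Gamma_W$, convert $\|m_F\|$ into a Kac-type integral of the excursion length against a transverse measure, and estimate the transverse measure of each excursion type by the Shadow Lemma. The reduction of both implications to the two-sided estimate $\|m_F\|\asymp 1+\sum_{\gamma\in\Gamma_W}d(o,\gamma o)e^{\int_o^{\gamma o}(F-\delta_F)}$, and the deduction of ergodicity and conservativity from finiteness via Poincar\'e recurrence and Hopf--Tsuji--Sullivan--Gibbs, are correct and match \cite{PS16}.

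There are, however, two genuine gaps. The decisive one is the lower bound $m_\Sigma(\Sigma_\gamma)\gtrsim e^{\int_o^{\gamma o}(F-\delta_F)}$, which you explicitly leave as ``the main obstacle''. This is not a routine remainder: it is the entire content of the ``only if'' implication (finiteness of $m_F$ forces convergence of the series), which is precisely the direction this paper uses in Propositions \ref{prop:vague} and \ref{prop:tight}. Proving it requires showing that, uniformly in $\gamma\in\Gamma_W$, a definite proportion of the geodesics aimed into the shadow $\mathcal{O}_o(B(\gamma o,r))$ really perform a clean excursion from $W$ to $\gamma W$; in \cite{PS16} this is done by shrinking/enlarging $W$ (working with nested compacta $W_{-\varepsilon}\subset W\subset W_R$) and using the closed geodesic in the interior of $W$ to guarantee positive transverse mass of the perturbed family --- none of which appears in your sketch. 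The second, more technical, gap is the bookkeeping with the cross-section: if $\sigma$ is the first-return time to a codimension-one transversal $\Sigma$ and $\Sigma_\gamma$ is defined by the first re-entry of the lift into $\Gamma W$, then $\sigma|_{\Sigma_\gamma}$ is \emph{not} comparable to $d(o,\gamma o)$ --- the orbit may re-enter $T^1W$ many times without meeting $\Sigma$, so a single return to $\Sigma$ can consist of a long chain of excursions. One must either sum over whole itineraries $(\gamma_1,\gamma_1\gamma_2,\dots)$ or, as in \cite{PS16}, compute $m_F((T^1W)^c)$ directly by fibering the complement over single excursions. As written, the identity $\|m_F\|\asymp 1+\sum_\gamma d(o,\gamma o)\,m_\Sigma(\Sigma_\gamma)$ does not follow from your decomposition.
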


Note that in \cite{PS16} it is assumed that $M$ has pinched negative curvature $-b^2\le \kappa\le -a^2<0$, but the lower bound is not used in the proof of this finiteness criterion.


\subsection{Equidistribution w.r.t. the vague convergence}

There are many variants of equidistribution of weighted closed orbits w.r.t. the vague convergence, which are essentially all equivalent. See \cite[Chapter 9]{PPS} for several versions.

The statement which is the closest to our Theorem \ref{theo:equid} is the following.
\begin{theo}[Paulin-Policott-Schapira Thm 9.11 \cite{PPS}]\label{theo:22}
Let $M$ be a manifold with pinched negative curvature, whose geodesic flow is topologically mixing. 
Let $F:T^1M \to \bbR$ be a H\"older-continuous map with finite pressure $\delta_F$, which admits a finite equilibrium state $m_F$. 
 Assume without loss of generality that   $\delta_F$ is positive.

Then 
\begin{equation}\label{equation:equid-vague}
\delta_F T  e^{-\delta_F T} \sum_{p\in \mathcal{P}'(T)}e^{\int_p F} \frac{1}{\ell(p)}\mathcal{L}_p \to \frac{m_F}{\|m_F\|}\,,
\end{equation} in the vague topology, i.e. the dual of continuous maps with compact support on $T^1M$.  
\end{theo}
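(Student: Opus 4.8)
The plan is to follow the now-classical route of Roblin and Paulin--Pollicott--Schapira, deriving the equidistribution from the mixing of the geodesic flow with respect to the normalized Gibbs measure together with the Shadow Lemma estimates.

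First I would fix a continuous compactly supported test function $\psi$ on $T^1M$, write $\mu_T$ for the measure appearing on the left-hand side of \eqref{equation:equid-vague}, and rewrite $\int\psi\,d\mu_T$ in terms of $\Gamma$. Every primitive periodic orbit $p$ of length at most $T$ corresponds to a conjugacy class of a primitive hyperbolic element $\gamma\in\Gamma$ with translation length $\ell(\gamma)=\ell(p)\le T$ whose axis projects to $p$, and $\int_p F=\int_0^{\ell(\gamma)}F(g^s v_\gamma)\,ds$ for any $v_\gamma$ on the axis. Since $\psi$ has compact support, only orbits meeting a fixed compact set contribute, so one may choose the representative $\gamma$ so that its axis passes through a fixed compact lift $W_0\subset\widetilde M$; Lemma~\ref{lemm:useful} then yields $\bigl|\int_p F-\int_o^{\gamma o}F\bigr|\le C$ and $\bigl|\ell(p)-d(o,\gamma o)\bigr|\le C'$, reducing the problem to the asymptotics of a weighted orbital sum $\sum_\gamma(\cdots)\,e^{\int_o^{\gamma o}F}$ over the $\gamma$ with $d(o,\gamma o)\le T$ whose axis meets $W_0$.

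Second, I would thicken each periodic orbit into a dynamical flow box of small size $\e$. Using the local product structure and H\"older continuity of the flow, the contribution of such an orbit to $\int\psi\,d\mu_T$ is comparable to $\frac{1}{m_F(\mathrm{box})}\int_{\mathrm{box}}\psi\,dm_F$ times the $m_F$-measure of the vectors in the box that return $\e$-close to themselves at time $\approx\ell(\gamma)$. Summing over orbits with length in a window and using the decomposition $d\tilde m_F\asymp d\nu_o^{\check F}\otimes d\nu_o^{F}\otimes dt$ together with the Shadow Lemma (Lemma~\ref{lemm:shadow}) turns the count of returns into the correlation quantity $\int_{T^1M}\psi\cdot(\psi\circ g^{t})\,dm_F$, up to a multiplicative constant and the factor $e^{-\delta_F t}$ coming from the normalization in the Shadow Lemma. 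This comparison, made quantitative along the (possibly deep) excursions of the orbits, is the heart of the argument.

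Third, I would invoke mixing: topological mixing plus finiteness of $m_F$ gives, by Babillot's theorem, that $(g^t)$ is mixing for $\bar m_F$, hence $\int\psi\cdot(\psi\circ g^t)\,dm_F\to\frac{1}{\|m_F\|}\bigl(\int\psi\,dm_F\bigr)^2$. Plugging this in, integrating over the length window, and letting the window shrink after $T\to\infty$ gives $\int\psi\,d\mu_T\to\|m_F\|^{-1}\int\psi\,dm_F$, provided the normalizing constant matches; this last point follows by running the same computation with $\psi$ replaced by an approximation of $\mathbf 1$, or equivalently from the parallel orbital counting asymptotic $\sum_{d(o,\gamma o)\le T,\ \mathrm{axis}\cap W_0\neq\emptyset}e^{\int_o^{\gamma o}(F-\delta_F)}\sim c\,e^{\delta_F T}/(\delta_F T)$, proved identically. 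All H\"older errors, box-size errors and normalization constants cancel in the ratio, which is exactly why the prefactor $\delta_F T e^{-\delta_F T}$ is the correct one. The main obstacle is the lack of uniform hyperbolicity and compactness, forcing one to control the closing lemma, the flow-box comparison and the Shadow Lemma estimates uniformly over arbitrarily long excursions into the ends, and to establish Babillot mixing for a merely finite Gibbs measure; note that restricting to compactly supported $\psi$ avoids any escape of mass to infinity, which is precisely the difficulty that must be overcome to pass to the narrow topology.
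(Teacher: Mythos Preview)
The paper does not prove this theorem: it is stated in Section~2.4 as a quotation of \cite[Thm~9.11]{PPS} and used as a black-box ingredient (together with the companion Theorem~\ref{theo:equid-vague-annulus}) in the proof of the new results. There is therefore no proof in the present paper to compare your proposal against.

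That said, your outline is a reasonable summary of the strategy actually carried out in \cite{Roblin} and \cite{PPS}: pass from periodic orbits to hyperbolic conjugacy classes via the closing lemma and Lemma~\ref{lemm:useful}, thicken orbits into small flow boxes, use the local product structure $d\tilde m_F\asymp d\nu_o^{\check F}\otimes d\nu_o^F\otimes dt$ and the Shadow Lemma to convert the weighted count into a correlation integral, and then invoke Babillot's mixing for the finite Gibbs measure. Two caveats are worth noting. First, your second step is where essentially all the work lies, and ``comparable to'' hides a two-sided estimate with error terms that must be driven to zero in the right order (box size $\e\to 0$ after $T\to\infty$); in \cite{PPS} this is a long chapter, not a paragraph. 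Second, your proposed normalization check (``run the same computation with $\psi\approx\mathbf 1$'') is circular in the noncompact setting, since $\mathbf 1$ is not compactly supported and the orbital counting asymptotic you invoke is itself a corollary of the theorem you are proving; in \cite{PPS} the constant is identified directly from the flow-box computation and the total mass $\|m_F\|$, not by a separate counting argument.
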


To get narrow equidistribution of periodic orbits, we will rather use the following statement. 

\begin{theo}[Paulin-Policott-Schapira thm 9.14  \cite{PPS}]\label{theo:equid-vague-annulus}
Let $M$ be a manifold with pinched negative curvature, whose geodesic flow is topologically mixing. 
Let $F:T^1M \to \bbR$ be a H\"older-continuous map with finite nonzero pressure $\delta_F$. Let $c>0$ be fixed. Assume   that $F$ 
admits a finite equilibrium state $m_F$. 
Then 
\begin{equation}\label{equation:equid-vague-annulus}
\frac{\delta_F T}{1-e^{-\delta_F c}}  T e^{-\delta_F T} \sum_{p\in \mathcal{P}'(T-c,T)}e^{\int_p F} \frac{1}{\ell(p)}\mathcal{L}_p \to \frac{m_F}{\|m_F\|}\,, \quad\mbox{and}
\end{equation} 
\begin{equation}\label{equation:equid-vague-annulus-bis}
\frac{\delta_F}{1-e^{-\delta_F c}}  e^{-\delta_F T} \sum_{p\in \mathcal{P}'(T-c,T)}e^{\int_p F}  \mathcal{L}_p \to \frac{m_F}{\|m_F\|}\,,
\end{equation}
in the vague topology, i.e. the dual of continuous maps with compact support on $T^1M$.  
\end{theo}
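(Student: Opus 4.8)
The plan is to deduce both displays from the cumulative equidistribution of Theorem~\ref{theo:22}, by a discrete-increment (telescoping) argument that uses only the elementary fact that on a window of bounded width $c$ the period $\ell(p)$ of a closed orbit equals $T+O(c)$. No dynamical input beyond Theorem~\ref{theo:22} is needed.

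First I would fix a continuous compactly supported function $\phi$ on $T^1M$ and abbreviate $\mu_T(\phi):=\delta_F T\,e^{-\delta_F T}\sum_{p\in\mathcal{P}'(T)}e^{\int_p F}\frac{1}{\ell(p)}\mathcal{L}_p(\phi)$, so that $\mu_T(\phi)\to \frac{m_F}{\|m_F\|}(\phi)$ by Theorem~\ref{theo:22}. Since $\mathcal{P}'(T-c,T)=\mathcal{P}'(T)\setminus\mathcal{P}'(T-c)$, the weighted sum over the window equals $\frac{e^{\delta_F T}}{\delta_F T}\mu_T(\phi)-\frac{e^{\delta_F(T-c)}}{\delta_F(T-c)}\mu_{T-c}(\phi)$. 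Multiplying by $\frac{\delta_F T}{1-e^{-\delta_F c}}\,e^{-\delta_F T}$ rewrites this as $\frac{1}{1-e^{-\delta_F c}}\bigl(\mu_T(\phi)-e^{-\delta_F c}\,\frac{T}{T-c}\,\mu_{T-c}(\phi)\bigr)$, which tends to $\frac{m_F}{\|m_F\|}(\phi)$ because $\mu_T(\phi),\mu_{T-c}(\phi)\to\frac{m_F}{\|m_F\|}(\phi)$ and $\frac{T}{T-c}\to1$. As $\phi$ is arbitrary this is \eqref{equation:equid-vague-annulus}. I would emphasize that one works with the genuinely convergent quantities $\mu_T(\phi)$ rather than with asymptotic equivalences, so that the case $\frac{m_F}{\|m_F\|}(\phi)=0$ is handled for free.

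For \eqref{equation:equid-vague-annulus-bis} I would compare the two normalizations directly: from $\mathcal{L}_p=\ell(p)\cdot\frac{1}{\ell(p)}\mathcal{L}_p$, the bound $|\ell(p)-T|\le c$ valid for $p\in\mathcal{P}'(T-c,T)$, and the positivity of $\mathcal{L}_p$, one gets $\bigl|\mathcal{L}_p(\phi)-T\cdot\frac{1}{\ell(p)}\mathcal{L}_p(\phi)\bigr|\le c\cdot\frac{1}{\ell(p)}\mathcal{L}_p(|\phi|)$. Summing over the window with the positive weights $e^{\int_p F}$ and multiplying by $\frac{\delta_F}{1-e^{-\delta_F c}}\,e^{-\delta_F T}$, the quantity in \eqref{equation:equid-vague-annulus-bis} differs from $\frac{\delta_F T}{1-e^{-\delta_F c}}\,e^{-\delta_F T}\sum_{p\in\mathcal{P}'(T-c,T)}e^{\int_p F}\frac{1}{\ell(p)}\mathcal{L}_p(\phi)$ by at most $\frac{c}{T}$ times $\frac{\delta_F T}{1-e^{-\delta_F c}}\,e^{-\delta_F T}\sum_{p\in\mathcal{P}'(T-c,T)}e^{\int_p F}\frac{1}{\ell(p)}\mathcal{L}_p(|\phi|)$; by \eqref{equation:equid-vague-annulus} applied to $|\phi|$ the latter converges, so this error is $O(1/T)$, and \eqref{equation:equid-vague-annulus-bis} follows from \eqref{equation:equid-vague-annulus} applied to $\phi$.

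I do not expect a genuine obstacle in this deduction: the entire analytic content --- topological mixing of the geodesic flow, the Patterson-Sullivan-Gibbs estimates of \cite{PPS}, and the finiteness of $m_F$ --- is packaged into Theorem~\ref{theo:22}, which is assumed. The only points requiring care are bookkeeping: one should stay at the level of the convergent $\mu_T(\phi)$ when invoking Theorem~\ref{theo:22} (this is exactly what makes the argument insensitive to whether $\frac{m_F}{\|m_F\|}(\phi)$ vanishes), and the exchange of $\mathcal{L}_p$ for $\frac{1}{\ell(p)}\mathcal{L}_p$ is legitimate only because the orbits are confined to a window of bounded width $c$, on which $\ell(p)=T\bigl(1+O(1/T)\bigr)$ uniformly.
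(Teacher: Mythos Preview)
Your deduction is correct. The paper does not give its own proof of Theorem~\ref{theo:equid-vague-annulus} --- it is quoted from \cite[Thm~9.14]{PPS} --- but in Section~\ref{final} the authors indicate that the passage between Theorems~\ref{theo:22} and~\ref{theo:equid-vague-annulus} is governed by the elementary summation lemma \cite[Lemma~9.5]{PPS}; your telescoping computation is exactly the direct implementation of the relevant direction (cumulative $\Rightarrow$ windowed) of that lemma, so the approaches coincide. Working with the convergent quantities $\mu_T(\phi)$ rather than with asymptotic equivalences is a clean way to absorb the case $\bar m_F(\phi)=0$ without splitting $\phi$ into positive and negative parts, and note that display~\eqref{equation:equid-vague-annulus} as printed carries a spurious extra factor of $T$ (compare with the definition of $m_T$ just below it); you have proven the intended single-$T$ statement.
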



\section{Equidistribution in the narrow topology}


\subsection{An equidistribution statement on annuli}

Denote by $m_T$ the (locally finite and possibly infinite) measure 
$$
m_T=\frac{\delta_F T }{1-e^{-c\delta_F }} e^{-\delta_F T} \sum_{p\in \mathcal{P}'(T-c,T)}e^{\int_p F} \frac{1}{\ell(p)}\mathcal{L}_p 
$$
and by $m_{T,W}$ the (finite) measure  
$$
m_{T,W}=\frac{\delta_F T }{1-e^{-c\delta_F }} \sum_{p\in \mathcal{P}'_\mathcal{W}(T-c,T)}e^{\int_p F} \frac{1}{\ell(p)}\mathcal{L}_p 
$$

We will first prove the following theorem, and then deduce Theorem \ref{theo:equid} from it. 
\begin{theo}\label{theo:equid-annuli}
Under the assumptions of Theorem \ref{theo:equid}, the measures $m_{T,W}$ converge to $\bar m_F = \frac{m_F}{\|m_F\|}$ w.r.t. the narrow convergence, i.e. in the dual of bounded continuous maps on $T^1M$. 
\end{theo}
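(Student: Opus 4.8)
The strategy is to upgrade the vague convergence in Theorem~\ref{theo:equid-vague-annulus} (applied on the window $(T-c,T]$) to narrow convergence of the \emph{restricted} measures $m_{T,W}$. Since a sequence of finite measures converging vaguely to a probability measure converges narrowly if and only if no mass escapes to infinity, the heart of the matter is a tightness estimate: one must show that for every $\e>0$ there is a compact set $K=K(\e)\subset T^1M$ such that $m_{T,W}(T^1M\setminus K)\le\e$ uniformly in $T$ large. Combined with the lower bound $\liminf_T m_{T,W}(T^1M)\ge \|\bar m_F\|=1$ coming from vague convergence on any large compact set, tightness forces $m_{T,W}(T^1M)\to 1$ and hence $m_{T,W}\to\bar m_F$ narrowly.

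\textbf{Key steps.} First I would fix the compact set $W\subset M$, lift everything to $\widetilde M$, and for each primitive periodic orbit $p\in\mathcal P'_{\mathcal W}(T-c,T)$ choose a representative: $p$ corresponds to a conjugacy class, and meeting $\mathcal W=T^1W$ means we may pick $\gamma=\gamma_p\in\Gamma$ whose axis passes within bounded distance of $o$ (after enlarging $W$ to contain $o$), with $d(o,\gamma_p o)$ close to $\ell(p)$ up to a bounded additive error by Lemma~\ref{lemm:useful}. Second, to estimate the mass that $\frac{1}{\ell(p)}\mathcal L_p$ puts outside a large ball $B(o,\rho)$ in $T^1M$, I would cover the ``far'' part of the orbit by the times the geodesic spends near group elements $\alpha o$ with $d(o,\alpha o)\ge \rho$; the measure of the piece of $p$ shadowed near such an $\alpha$ is controlled, and summing $e^{\int_p F}$ times these contributions over all $p$ reduces — via the choice of $\gamma_p$ and a decomposition of $\gamma_p$ through a far point $\alpha$ — to a sum of the shape $\sum_{\alpha:\,d(o,\alpha o)\ge\rho}\ (\cdots)\,e^{\int_o^{\alpha o}(F-\delta_F)}$, where the inner factor is itself bounded using Proposition~\ref{prop:nice} (applied to count the $\gamma$'s passing through the shadow of $\alpha$ with length in the prescribed window). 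Third, the resulting bound should be comparable to a tail $\sum_{\alpha\in\Gamma_W,\,d(o,\alpha o)\ge\rho} d(o,\alpha o)\, e^{\int_o^{\alpha o}(F-\delta_F)}$, which tends to $0$ as $\rho\to\infty$ precisely by the finiteness criterion Theorem~\ref{finiteness-criterion}, since $m_F$ is assumed finite. This gives uniform tightness and closes the argument.

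\textbf{Main obstacle.} The delicate point is the third step: matching the geometric ``mass of $p$ far from $o$'' against the convergent series of Theorem~\ref{finiteness-criterion}, i.e.\ identifying the correct group elements $\alpha$ (they must lie in, or be controlled by, $\Gamma_W$, which is why $W$ must be chosen with nonempty interior meeting a closed geodesic) and keeping the multiplicities bounded. One has to argue that a long excursion of the periodic geodesic $p$ outside $B(o,\rho)$ decomposes into sub-excursions each governed by some $\alpha$ with $[o,\alpha o]\cap\Gamma W\subset W\cup\alpha W$, so that Proposition~\ref{prop:nice} applies with $c$ fixed and the outer sum is exactly the $\Gamma_W$-tail. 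Handling the bookkeeping of these excursions — ensuring no double counting across different orbits $p$ and that the bounded-multiplicity of shadow overlaps survives the passage to the quotient — is where the real work lies; the rest is a combination of the Shadow Lemma, Lemma~\ref{lemm:useful}, and the vague convergence already available.
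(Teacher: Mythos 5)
Your tightness step is essentially the paper's argument: decompose the excursions of each orbit $p$ outside $\mathcal W_R$ into pieces governed by elements $\alpha\in\Gamma_{\widetilde W}$ with $d(o,\alpha o)\ge 2R$, control the weighted count of orbits through the shadow of $\alpha o$ in the length window via the Shadow Lemma and Proposition~\ref{prop:nice}, and recognize the outer sum as the tail of the convergent series of Theorem~\ref{finiteness-criterion}. That part is sound in outline.

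However, there is a genuine gap at the start. Theorem~\ref{theo:equid-vague-annulus} gives vague convergence of the \emph{unrestricted} measures $m_T$ (summing over all of $\mathcal P'(T-c,T)$), not of $m_{T,W}$. Since $m_{T,W}\le m_T$, vague convergence of $m_T$ only yields $\limsup_T m_{T,W}(\varphi)\le \bar m_F(\varphi)$ for $0\le\varphi\in C_c$; it does \emph{not} give the lower bound $\liminf_T m_{T,W}(T^1M)\ge 1$ that your plan invokes, because a priori a definite fraction of the orbits of length in $(T-c,T]$ passing through a given compact set could avoid $\mathcal W$ entirely, in which case $m_{T,W}$ would converge (even narrowly) to some $\lambda\,\bar m_F$ with $\lambda<1$, or worse. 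Tightness alone cannot rule this out: it bounds escape of mass to infinity but says nothing about mass simply being absent from the restricted sum. You therefore need a separate argument that $m_T-m_{T,W}\to 0$ vaguely. The paper's proof of this (Proposition~\ref{prop:vague}) is not a formality: one shows that a closed orbit of length about $T$ meeting the $R$-neighbourhood $\mathcal W_R$ (which contains the support of the test function) but avoiding $\mathcal W$ produces, after passing to a representative axis and cutting the segment $[o,\gamma_p o]$ at its last and first visits to $\Gamma\widetilde W_{-\e}$, a \emph{single} element $g_p\in\Gamma_{\widetilde W_{-\e}}$ with $d(o,g_p o)\ge T-c-C$ and $\int_o^{g_p o}F$ close to $\int_p F$, with bounded multiplicity $p\mapsto g_p$; the contribution of all such orbits is then dominated by the tail $\sum_{g\in\Gamma_{\widetilde W_{-\e}},\,d(o,go)\ge T-c-C}d(o,go)e^{\int_o^{go}(F-\delta_F)}$, which vanishes as $T\to\infty$ by Theorem~\ref{finiteness-criterion}. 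Note that this is a different use of the finiteness criterion from the one in your tightness step: there the excursion parameter $R\to\infty$ with $T$ fixed large; here the whole orbit is one long excursion and the parameter going to infinity is $T$ itself. Without this step your argument does not close.
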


The proof of Theorem \ref{theo:equid-annuli} goes in two steps. 

First, we prove that $m_{T,W}-m_T$ goes to $0$ in the vague topology, so that by  Theorem \ref{theo:equid-vague-annulus},  
$m_{T,W}$ and $m_T$ have the same limit $\bar m_F$  in the vague topology. 

Second, we prove a tightness result\,: for all $\varepsilon>0$, there exists $K_\varepsilon\subset T^1M$ and $t_0>0$, such that for all $t\ge t_0$, 
$m_{T,W}(K_\varepsilon )\ge 1-\varepsilon$. 
 
Conclusion is then classical: we will deduce Theorem \ref{theo:equid} from Theorem \ref{theo:equid-annuli} in Paragraph \ref{final}. 

 
\subsection{Vague convergence }

Choose any compact set $W\subset M$ whose interior intersects a closed geodesic, and $\mathcal{W}=T^1W$. 
In this section, we prove the following.

\begin{prop}\label{prop:vague}
Under the assumptions of Theorem \ref{theo:equid}, $m_T-m_{T,W}$ converges to $0$ in the vague topology, when $T\to +\infty$. 
\end{prop}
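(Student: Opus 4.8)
The plan is to show that the difference $m_T - m_{T,W}$ is carried by periodic orbits that do \emph{not} meet $\mathcal{W}$, and that such orbits contribute a negligible amount of mass to any fixed compact set $\mathcal{K}\subset T^1M$. Fix a continuous function $\phi$ with compact support contained in $\mathcal{K}$. Then
$$
(m_T - m_{T,W})(\phi) = \frac{\delta_F T}{1-e^{-c\delta_F}}\, e^{-\delta_F T} \sum_{p\in \mathcal{P}'(T-c,T)\setminus \mathcal{P}'_\mathcal{W}(T-c,T)} e^{\int_p F}\, \frac{1}{\ell(p)}\int \phi\, d\mathcal{L}_p\,,
$$
and since $\mathrm{supp}\,\phi\subset\mathcal{K}$, only orbits $p$ meeting $\mathcal{K}$ but not $\mathcal{W}$ survive. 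So the key point is: a closed geodesic of length in $(T-c,T]$ that visits $\mathcal{K}$ must, if it avoids $\mathcal{W}$, spend a long time outside the (say) larger compact set $\mathcal{W}$ before returning to $\mathcal{K}$; in particular such an orbit has an \emph{excursion} out of $\mathcal{W}$, and the measures $\frac{1}{\ell(p)}\mathcal{L}_p$ restricted to $\mathcal{K}$ on these orbits must be small because $\mathcal{K}$ is visited for a controlled proportion of the time while the total length is of order $T$. The cleanest way to exploit this is to compare with the \emph{full} vague limit: by Theorem \ref{theo:equid-vague-annulus}, equation \eqref{equation:equid-vague-annulus}, $m_T \to \bar m_F$ vaguely, and one wants the analogous statement for $m_{T,W}$, i.e. that the extra orbits don't matter.

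The main steps, in order, would be: (i) choose a compact set $\mathcal{W}' = T^1 W'$ with $W\subset \inter{W'}$ large enough that $\mathcal{W}'$ has positive $m_F$-measure and $\bar m_F(\partial \mathcal{W}') = 0$; (ii) observe that any periodic orbit $p$ meeting $\mathcal{K}\setminus\mathcal{W}$ decomposes, along its period, into arcs inside $W'$ and excursions outside; bound the $\mathcal{K}$-time of $p$ by a uniform multiple of the number of such "inside" arcs, using that the flow crosses the compact collar $W'\setminus W$ in bounded time and that between two visits to $\mathcal{K}$ not separated by an excursion the orbit stays in a compact set — hence such near-returns are uniformly bounded in number per orbit (local finiteness of the flow); (iii) therefore $\int\phi\,d\mathcal{L}_p \le \|\phi\|_\infty \cdot (\text{number of excursions of }p) \cdot (\text{const})$, and reduce to estimating $e^{-\delta_F T}\sum_{p} e^{\int_p F}\cdot(\#\text{excursions})/\ell(p)$; (iv) code each excursion by a group element $\gamma\notin\Gamma_{W'}$ (an excursion corresponds precisely to a geodesic segment leaving $W'$ and coming back, i.e. to a $\gamma$ in the complement of the "bounded-return" set), and use Proposition \ref{prop:nice} together with the summation $\sum_{\gamma\in\Gamma_{W'}} d(o,\gamma o) e^{\int_o^{\gamma o}(F-\delta_F)}<\infty$ from Theorem \ref{finiteness-criterion} to show the excursion-weighted sum is $o(e^{\delta_F T}/T)$ relative to the full sum; (v) conclude $(m_T-m_{T,W})(\phi)\to 0$.

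Concretely, for step (iv)–(v) one splits a periodic orbit of length $\le T$ into its successive passages through $W'$; the orbital measure restricted to $\mathcal{K}$ is controlled by the number of these passages, and by a classical argument (orbital measures supported on long orbits with an excursion of definite size) the contribution of orbits having an excursion out of $W'$ is comparable, after coding by the relevant group elements, to tails of the convergent series in Theorem \ref{finiteness-criterion}. Summing over the number and the lengths of excursions, and using the finiteness of $\sum_{\Gamma_{W'}} d(o,\gamma o)e^{\int(F-\delta_F)}$ to make the tail as small as we like by taking $W'$ large, gives that for any $\e>0$ the liminf and limsup of $(m_T-m_{T,W})(\phi)$ are within $\e\|\phi\|_\infty$ of $0$.

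The main obstacle I expect is step (ii)–(iv): turning the geometric statement "an orbit avoiding $\mathcal{W}$ but hitting $\mathcal{K}$ has a long excursion" into a \emph{quantitative, summable} bound. The delicate point is the bookkeeping that matches each excursion of a closed geodesic with a group element outside $\Gamma_{W'}$ in a way compatible with the Shadow-Lemma estimates of Proposition \ref{prop:nice}, and then controlling the number of excursions so that the resulting double sum (over orbits, and over excursions within an orbit) genuinely reduces to the convergent series of Theorem \ref{finiteness-criterion} rather than something larger. This is exactly the place where the finiteness criterion for $m_F$ — and not merely finiteness of $m_F$ itself — is needed, and it is the technical heart of the argument.
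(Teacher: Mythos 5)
Your overall reduction (the difference is carried by orbits meeting $\mathrm{supp}\,\phi$ but avoiding $\mathcal{W}$) is the right starting point, but the mechanism you propose for smallness does not work, and it misses the one observation that makes this proposition easy. Step (ii)--(iii) is false as stated: a closed geodesic of length $\approx T$ that avoids $W$ can perfectly well stay in a fixed compact part of $M$ and visit $\mathcal{K}$ a number of times growing linearly in $T$ (it need not have \emph{any} excursion out of a large $W'$), so $\int\phi\,d\mathcal{L}_p$ is \emph{not} bounded by a constant times the number of excursions; it can be of order $\ell(p)$. Relatedly, your source of smallness --- ``make the tail as small as we like by taking $W'$ large'' --- is the mechanism of the tightness statement (Proposition \ref{prop:tight}), where one lets $R\to\infty$; here $W$ is fixed and the decay must come from $T\to\infty$. (Also, an excursion corresponds to an element \emph{of} $\Gamma_{W'}$, by the very definition of $\Gamma_{W}$, not to an element of its complement.)

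The point you are missing is that an orbit $p$ contributing to $m_T-m_{T,W}$ avoids $\Gamma\widetilde W$ along its \emph{entire} period, so no decomposition into excursions is needed: the whole orbit is, in effect, a single excursion of length $\approx T$. Concretely, one picks $o$ in a slightly shrunk set $\widetilde W_{-\varepsilon}$ (still meeting a closed geodesic), takes the geodesic from $o$ to $\gamma_p o$ for a representative $\gamma_p$ of $p$ whose axis meets $\widetilde W_R$; this geodesic fellow-travels the axis except on two segments of bounded length $\tau$, hence its middle part avoids $\Gamma\widetilde W_{-\varepsilon}$, and after trimming at both ends one obtains a \emph{single} element $g_p\in\Gamma_{\widetilde W_{-\varepsilon}}$ with $d(o,g_p o)=\ell(p)+O(1)\approx T$ and $\int_o^{g_po}F=\int_pF+O(1)$ (Lemma \ref{lemm:useful}), the map $p\mapsto g_p$ having bounded multiplicity by discreteness. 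Using the trivial bound $\int\phi\,d\mathcal{L}_p\le\|\phi\|_\infty\,\ell(p)$, the difference is then dominated by the tail $\sum_{g\in\Gamma_{\widetilde W_{-\varepsilon}},\,d(o,go)\ge T-C'} d(o,go)\,e^{\int_o^{go}(F-\delta_F)}$, whose convergence is exactly Theorem \ref{finiteness-criterion}; the weight $d(o,go)\approx\ell(p)$ in that series is precisely what absorbs the factor $\ell(p)$ you cannot avoid in step (iii). Neither Proposition \ref{prop:nice} nor any count of returns to $\mathcal{K}$ is needed for this proposition.
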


\begin{proof} 
Let $\varphi\in C_c(T^1M)$ be a continuous compactly supported map. Without loss of generality, we can assume that $\|\varphi\|_\infty\le 1$
Choose $R>0$ such that the $R$-neighbourhood $W_R$ of $W$ in $M$ contains the projection on $M$ of the support of $\varphi$ in $T^1M$. 
Choose some $\varepsilon>0$ small enough so that the set $W_{-\varepsilon}$ of points of $W$ at distance at least $\varepsilon$ to the boundary is 
nonempty and intersects at least a closed geodesic.

Let $\widetilde W_R \supset \widetilde W\supset \widetilde W{-\varepsilon}$ 
be three compact sets of $\widetilde M$ which project respectively onto $W_R$,  $W$ and $W_{-\varepsilon}$.  
Choose a  point $o\in \widetilde W_{-\varepsilon}$ once for all.  

We begin with the following elementary inequality. For $p\in \mathcal{P}'(T-c,T)$, we have $|\delta_F T-\delta_F\ell(p)|\le c\delta_F $, so that  
\begin{equation}\label{eqn:elementarybound}
\left|m_{T,W_R}(\varphi)-m_{T,W}(\varphi)\right|\le \frac{\delta_F}{1-e^{-c\delta_F}} T e^{\delta_F c}
 \sum_{p\in \mathcal{P}'_{W_R}(T-c,T) \setminus \mathcal{P}'_W(T-c, T) }e^{\int_p (F-\delta_F)}\frac{\ell(p\cap W_R)}{\ell(p)}\,.
\end{equation}
Now, we will compare the latter sum with the sum appearing in Theorem \ref{finiteness-criterion}.

Given $p\in \mathcal{P}'_{W_R}(T-c,T) \setminus \mathcal{P}'_W(T-c, T)$, choose arbitrarily one isometry $\gamma_p\in \Gamma$, 
whose translation axis intersects $\widetilde W_R\setminus \widetilde W$
and projects on $M$ on the closed geodesic associated to the periodic orbit $p$, and whose translation length is $\ell(p)$.

Consider the geodesic from $o$ to $\gamma_p o$, parametrized as $(c(t))_{0\le t\le d(o,\gamma_p o)}$. 
It stays at bounded distance $\mbox{diam}(\tilde W_R)$ from the axis of $\gamma_p$. 
Therefore, these geodesics stay very close one from another, except at the beginning and at the end. 
More precisely, given any $\varepsilon>0$, there exists $\tau$ depending only 
on $\varepsilon$ and the upper bound of the curvature, 
such that for $t$ in  the interval $[\tau,d(o,\gamma_p o)-\tau]$, $c(t)$ is $\varepsilon$-close to 
the axis of $\gamma_p$. In particular, as this axis does not intersect $\Gamma.\widetilde W$, 
the geodesic $(c(t))_{\tau \le t \le d(o,\gamma_p o)-\tau}$ 
does not intersect $\widetilde W_{-\varepsilon}$, whereas the full segment $(c(t))_{0\le t\le d(o,\gamma_p o)}$ starts and ends in $\Gamma.o\subset \Gamma \widetilde W_{-\varepsilon}$. 

Denote by $x^-$ the last point of $c([0,\tau])$ (resp  $x^+$ the first point of $c([d(o,\gamma_p o)-\tau,d(o,\gamma_p o)])$) ) 
in $\Gamma \widetilde W_{-\varepsilon}$ and $\gamma^-$ (resp. $\gamma^+$) an element of $\Gamma$ such that $x^\pm\in\gamma^\pm \widetilde W_{-\varepsilon}$.
Observe that $d(o,\gamma^- o)\le \tau+\mbox{diam}(\widetilde W_R)$, and similarly $d(\gamma_p o,\gamma^+o)\le \tau + \mbox{diam}(\widetilde W_R)$.Moreover, by definition of $\Gamma_{\widetilde W_{-\varepsilon}}$, the element  $g_p:= (\gamma^-)^{-1}\circ \gamma^+$ belongs to $ \Gamma_{\widetilde W_{-\varepsilon}}$.
 
Using Lemma \ref{lemm:useful}, we see easily that there exists some constant $C$ depending on the upperbound of the curvature, 
on the H\"older constant of $F$ and $\|F_{|\mathcal{W}_R}\|_\infty$ and on the diameter of $\widetilde W_R$, such that 
\begin{equation}\label{eqn:comparaison}
\left|\ell(p)-d(o,g_p o) \right|\le C 
\quad\mbox{and}\quad 
\left|\int_p F-\int_o^{g_p o} F\right| \le C\,.
\end{equation}

 We have hence defined a procedure which, given any $p\in \mathcal{P}'_{W_R}(T-c,T) \setminus \mathcal{P}'_W(T-c, T)$ and 
any choice of an isometry $\gamma_p$ in the conjugacy class corresponding to $p$ whose axis intersects $\widetilde W_R$, determines a unique pair $(\gamma_p^-,g_p)$ where  
$d(o,\gamma^- o)\le \mbox{diam}(\widetilde W_R)+\tau$ and  $g_p= (\gamma^-)^{-1}\circ \gamma^+ $ is an element of $\Gamma_{\widetilde W_{-\varepsilon}}$, 
satisfying (\ref{eqn:comparaison}) and $d(\gamma^-g_po, \gamma_p o)\le \mbox{diam}(\widetilde W_R)+\tau$.  \\ 
Moreover, a coarse bound gives $\ell(p\cap\widetilde W_R)\le \ell(p)\le d(o,g_p o)+C$. 

\medskip

We want to control from above  (\ref{eqn:elementarybound}) by  a sum involving $ \Gamma_{\widetilde W_{-\varepsilon}}$. 
To do that, it is enough to control the multiplicity of the ``map'' $p\to g_p$.

Let $p_1$ be a periodic orbit leading to an element $g_p\in  \Gamma_{\widetilde W_{-\varepsilon}}$ by the above construction,
by some arbitrary choice of an axis of an isometry $\gamma_1$ intersecting $\widetilde W_R$.   
If another periodic orbit $p_2$ leads to the same element $g_p$, it means that there exists an isometry $\gamma_2$ and elements $\gamma_i^-\in \Gamma$, with 
$d(o,\gamma_i^-o)\le \mbox{diam}(\widetilde W_R)+\tau$, such that 
$$
d(o,(\gamma_2)^{-1}\gamma_2^-(\gamma_1^-)^{-1}\gamma_1 o)\le 2\mbox{diam}(\widetilde W_R)+2\tau\,.
$$
In particular, as $\Gamma$ is discrete, there are finitely many possibilities,   for $\gamma_i^-$ and therefore for $\gamma_2$, and $p_2$.\\
Denote by $N$ the maximal multiplicity of this map $p\to g_p$. 

Now, using (\ref{eqn:comparaison}), we bound from above the right hand side of (\ref{eqn:elementarybound}) by
$$
\frac{\delta_F}{1-e^{-c\delta_F}} T e^{c\delta_F } N\sum_{g\in \Gamma_{\widetilde W_{-\varepsilon}},\, d(o,go)\in [T-c-C,T+C]} e^{C+\delta_F}e^{\int_o^{g o} (F-\delta_F)}\frac{d(o,g o)}{T-c}\,.
 $$
This sum, up to constants, is bounded from above by 
$$
\sum_{g\in \Gamma_{\tilde W_{-\varepsilon}}, d(o,go)\ge T-c-C} d(o,go)\,e^{\int_o^{go} (F-\delta_F)}\,.
$$
Theorem \ref{finiteness-criterion} ensures us that this is the rest of a convergent series, whence it goes to zero as $T\to +\infty$. 
\end{proof}

\subsection{Tightness}
Let $W$ be a compact set of $M$, $W_R$ its $R$-neighbourhood, for $R$ large enough, and 
$\widetilde W\subset\widetilde W_R\subset \widetilde M$ compact sets which project onto $W\subset W_R$. Choose some
fixed point $o\in\widetilde W$. 
As above,  we denote by $\mathcal{W}$ the unit tangent bundle of $W$ and by abuse of notation, set $\mathcal{W}_R=T^1W_R$. 
\begin{prop} \label{prop:tight} Under the assumptions of Theorem \ref{theo:equid}, for all $\varepsilon>0$, there exists $R>0$ and $T>0$, such that for $t\ge T$, 
$$
m_{t,W}((\mathcal{W}_R)^c)\le \varepsilon\,.
$$
\end{prop}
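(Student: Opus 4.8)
The goal is a tightness estimate for the finite measures $m_{t,W}$: outside the large compact set $\mathcal{W}_R = T^1 W_R$ the mass is uniformly small for $t$ large. The natural strategy is to bound $m_{t,W}((\mathcal{W}_R)^c)$ by a sum over group elements indexed (as in the proof of Proposition~\ref{prop:vague}) by periodic orbits meeting $\mathcal{W}$, then to decompose each such orbit according to its long excursions outside $W_R$, and finally to recognize the resulting series as a tail of a convergent series controlled by the finiteness criterion, Theorem~\ref{finiteness-criterion}.

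\textbf{First step: reduce to a group sum via coding of excursions.} Fix $\varphi = \mathbf{1}_{(\mathcal{W}_R)^c}$ (or a continuous function dominating it supported off $\mathcal{W}_{R/2}$). For each $p \in \mathcal{P}'_\mathcal{W}(t-c,t)$ choose a representative isometry $\gamma_p$ whose axis meets $\widetilde W$, lift to the geodesic from $o$ to $\gamma_p o$, and split this segment at the successive entrance/exit times of $\Gamma\widetilde W$ (equivalently, cut the closed geodesic along the pieces of length $\ell(p \cap W_R^c)$ spent outside $W_R$). The contribution of $p$ to $m_{t,W}((\mathcal{W}_R)^c)$ is $\frac{\delta_F t}{1-e^{-c\delta_F}} e^{-\delta_F t} e^{\int_p F} \frac{\ell(p \cap W_R^c)}{\ell(p)}$, and up to the bounded factor $\frac{\ell(p\cap W_R^c)}{\ell(p)}\le 1$ this is summable by equidistribution; the point is that this crude bound is \emph{not} enough, so one must keep the factor $\ell(p\cap W_R^c)$ and show it is killed by requiring $R$ large. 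Here is where I would use the following dichotomy: either every excursion of $p$ outside $W_R$ has length $\le L$ for some threshold $L = L(R,\varepsilon)$, or $p$ has an excursion of length $> L$. In the first case the total time outside $W_R$ is at most a fixed fraction of $\ell(p)$ if the ``return geometry'' is controlled, but that is false in general; instead, in the first case one writes $\ell(p\cap W_R^c)/\ell(p)$ and sums over \emph{all} of $\mathcal{P}'_\mathcal{W}(t-c,t)$ — this is bounded by $m_{t,W}(\mathcal{W}_R^c \cap \{\text{short excursions}\})$ which we can try to control by a different mechanism, e.g. the vague convergence of $m_{t,W}$ to $\bar m_F$ applied to a continuous bump supported on short-excursion region, together with $\bar m_F$ being a finite measure so that $\bar m_F((\mathcal{W}_R)^c) \to 0$ as $R \to \infty$. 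The second case — orbits with a long excursion — is the heart of the matter.

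\textbf{Second step (the main obstacle): the long-excursion sum.} For orbits with an excursion of length $> L$ outside $\widetilde W_R$, apply the coding from Proposition~\ref{prop:vague}, but to the excursion subsegment rather than to the whole orbit: the subsegment of $c$ of length $\ge L$ outside $\Gamma\widetilde W$ corresponds, after closing up at $\Gamma \widetilde W_{-\varepsilon}$ at both ends, to an element $g \in \Gamma_{\widetilde W_{-\varepsilon}}$ with $d(o,go) \ge L - O(1)$ and with $\left|\int_{\text{excursion}} F - \int_o^{go} F\right| \le C$. The weight $e^{\int_p F} e^{-\delta_F t}$ factors, via Lemma~\ref{lemm:useful} and the cocycle property, as $e^{\int_o^{go}(F - \delta_F)}$ times a bounded-times-$e^{-\delta_F(t - \text{excursion length})}$ factor, and summing the \emph{remaining} part of the orbit over periodic orbits of the appropriate (smaller) length class contributes a bounded amount by Theorem~\ref{theo:equid-vague-annulus}. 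The remaining sum over the excursion is then dominated, up to a uniformly bounded multiplicity, by
$$
\sum_{g \in \Gamma_{\widetilde W_{-\varepsilon}},\ d(o,go) \ge L - C} d(o,go)\, e^{\int_o^{go}(F - \delta_F)},
$$
exactly the tail of the convergent series in Theorem~\ref{finiteness-criterion}; hence it is $< \varepsilon$ once $L$ (equivalently $R$) is large. The factor $\ell(p\cap W_R^c)$ is absorbed because on a long excursion $d(o,go)$ is comparable to the excursion length, which is exactly the $d(o,go)$ weight in the finiteness series. The main difficulty I anticipate is bookkeeping the decomposition cleanly when an orbit has several long excursions (one should pick, say, the first one and bound the multiplicity of the resulting map $p \mapsto (\text{decomposition data})$, as was done for $p \mapsto g_p$ in Proposition~\ref{prop:vague}), and making sure the ``short-excursion'' part of the first step is genuinely controlled by $\bar m_F((\mathcal{W}_R)^c)$ rather than circularly by the quantity we are estimating — this likely requires first establishing vague convergence of $m_{t,W}$ (Proposition~\ref{prop:vague} plus Theorem~\ref{theo:equid-vague-annulus}) and then a separate argument that short excursions cannot accumulate mass far from $W$, which follows because a point of $T^1M$ on a short excursion lies within bounded distance of $W_R$, i.e. inside $\mathcal{W}_{R + L}$, so choosing the neighbourhood parameter slightly larger than the excursion threshold closes the loop.
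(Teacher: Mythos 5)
Your overall strategy is the paper's: code the time that $p$ spends outside $W_R$ by excursions, attach to each excursion an element of $\Gamma_{\widetilde W}$, and recognize the resulting bound as the tail $\sum_{\alpha\in\Gamma_{\widetilde W},\, d(o,\alpha o)\ge 2R} d(o,\alpha o)\, e^{\int_o^{\alpha o}(F-\delta_F)}$ of the convergent series of Theorem \ref{finiteness-criterion}. But two steps of your sketch do not go through as written. The main one is the inner summation: after fixing an excursion element $\alpha$, you must bound $\sum e^{\int_p(F-\delta_F)}$ over all $p\in\mathcal{P}'_W(t-c,t)$ containing that excursion, and you propose to do this by summing ``the remaining part of the orbit'' via Theorem \ref{theo:equid-vague-annulus}. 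The remaining part is a geodesic segment from $\alpha\widetilde W$ back to $\widetilde W$, not a periodic orbit, so that theorem does not apply to it; turning it into a closed orbit would require an unstated closing construction plus a multiplicity bound, and even then the vague convergence only controls integrals against compactly supported test functions, not the raw weighted count you need. The paper's mechanism is different and is the key missing ingredient here: if $\gamma_p$ is the isometry representing $p$, then $\gamma_p o$ lies in the shadow $\mathcal{O}_o(B(\alpha o,r))$ with $d(o,\gamma_p o)\in[t-c-C,t+C]$, and Proposition \ref{prop:nice} (a consequence of the Shadow Lemma \ref{lemm:shadow}) bounds the weighted sum over all such $\gamma$ by $k\,e^{\int_o^{\alpha o}(F-\delta_F)}$. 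Without this (or an equally effective substitute) the reduction to the finiteness-criterion tail is not established.

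The second problem is the bookkeeping. Retaining only ``the first'' long excursion of $p$ cannot absorb the whole factor $\ell(p\cap W_R^c)$: an orbit may have many long excursions, and $d(o,g o)$ for the first one does not dominate their total length. One must instead bound $\ell(p\cap W_R^c)\le\sum_{\alpha}\bigl(d(o,\alpha o)+2\,\mathrm{diam}(\widetilde W)\bigr)$ over \emph{all} excursions $\alpha$ of $p$ and then exchange the order of summation over $p$ and over $\alpha$; the multiplicity is then handled by Proposition \ref{prop:nice} as above, not by a choice of a distinguished excursion. Finally, your short/long dichotomy (and the delicate, self-admittedly near-circular treatment of the short case via vague convergence) is an avoidable complication: if you decompose the lifted axis along its consecutive visits to $\Gamma\widetilde W$ rather than along maximal intervals outside $W_R$, then any excursion between consecutive translates of $\widetilde W$ that exits $\Gamma\widetilde W_R$ automatically satisfies $d(o,\alpha o)\ge 2R$, so every relevant excursion is ``long'' and the first step of your plan disappears entirely.
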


\begin{proof} 
By definition of $m_{t,W}$, we have
\begin{equation}\label{eqn:first}
m_{t,W}((\mathcal{W}_R)^c)\le \frac{\delta_F}{1-e^{-c\delta_F}} \frac{t}{t-c}\sum_{p\in\mathcal{P}'_W(t-c,t)}e^{\int_p (F-\delta_F)}\ell(p\cap W_R^c)\,.
\end{equation}
If $p$ is a periodic orbit appearing in the above sum with $\ell(p\cap W_R^c)\neq 0$, there exists an hyperbolic isometry
$\gamma_p\in \Gamma$ whose axis projects onto the closed geodesic associated to $p$, which intersects $\widetilde W$ and $\gamma_p\widetilde W$, but also 
$\Gamma.(\widetilde W_R)^c$. 
Denote by 
$\Gamma_{\widetilde W}(p,W_R)$ the set of elements $\alpha\in \Gamma_{\widetilde W}$ such that some axis of some isometry $\gamma_p$ associated to $p$ as above intersects
$\widetilde W$, and $\alpha\widetilde W$ and goes outside $\Gamma \widetilde W_R$ between $\widetilde W$ and $\alpha\widetilde W$. 
Each $\alpha \in \Gamma_{\widetilde W}(p,W_R)$ encodes exactly one excursion of the periodic orbit $p$ outside $W_R$. 
In particular, $d(o,\alpha o)\ge 2R$, and we have
$$
\ell(p\cap (W_R)^c)\le 
\sum_{\alpha\in \Gamma_{\widetilde W}(p,W_R), d(o,\alpha o)\ge 2R} \left(d(o,\alpha o)+2\mbox{diam}(\widetilde W)\right)\,.
$$
We deduce that (\ref{eqn:first}) is bounded from above, up to some constants, by
$$
\sum_{p\in\mathcal{P}'_W(t-c,t)}e^{\int_p (F-\delta_F)} \sum_{\alpha\in \Gamma_{\widetilde W}(p,W_R), d(o,\alpha o)\ge 2R} (d(o,\alpha o) + 2 \mbox{diam}(\widetilde W))
$$ 
Observe now that if $\alpha\in \Gamma_{\widetilde W}(p,W_R)$ and $\gamma_p$ is an isometry whose axis  intersects
$\widetilde W$, and $\alpha\widetilde W$, then $\gamma_p o$ belongs to  the shadow $\mathcal{O}_o(B(\alpha o, r))$ for $r=\mbox{diam}(\tilde W)$. 
As in (\ref{eqn:comparaison}) in the proof of Proposition \ref{prop:vague}, we know that $\int_p(F-\delta_F)$ is uniformly close to $\int_o^{\gamma_p o}(F-\delta_F)$, 
which, by the Shadow Lemma \ref{lemm:shadow}, is comparable to $\nu_o^F(\mathcal{O}_o(B(\gamma o,r))$ for $r$ large enough.
Up to some constants, the above sum is bounded from above by
$$
\sum_{\alpha\in\Gamma_{W},\,d(o,\alpha o)\ge 2R} d(o,\alpha o)\sum_{\gamma \in \Gamma, t-c-C \le d(o,\gamma o)\le t+C} {\bf 1}_{\mathcal{O}_o(B(\alpha o,r)}(\gamma o)\nu_o^F(\mathcal{O}_o(B(\gamma o,r)),.
$$
By Proposition \ref{prop:nice}, (\ref{eqn:first}) is then dominated (up to multiplicative constants) by

$$
\sum_{\alpha\in\Gamma_W,d(o,\alpha o)\ge 2R} d(o,\alpha o) e^{\int_o^{\alpha o}(F-\delta_F)}.
$$
This is the rest of the convergent series appearing in Theorem \ref{finiteness-criterion}. 
Therefore, it goes to $0$ when $R\to +\infty$, so that for $R$ large enough, it is smaller than $\varepsilon$. 
It is the desired result.  
\end{proof}

\subsection{Conclusion}

\subsubsection{Proof of Theorem \ref{theo:equid-annuli}}

Getting narrow convergence from vague convergence and tightness is very classical, we recall it for the comfort of the reader. 

By \cite[Theorem 9.14]{PPS} (see Theorem \ref{theo:equid-vague-annulus}), we know that $m_{t}$ converges towards the normalized probability measure 
$\overline{m}_F:=\frac{m_F}{\|m_F\|}$ in the vague topology, i.e. the dual of $C_c(T^1 M)$. 
Proposition \ref{prop:vague} ensures that $m_{t,W}$ also converges vaguely to   $\overline{m_F}$.

Given a continuous bounded function $\varphi$ and $\varepsilon>0$, by Proposition \ref{prop:tight}, one can find a compact 
set $\mathcal{W}_R$ such that $m_{t,W}((\mathcal{W}_R)^c)\le \varepsilon$ for all $t\ge T$, 
$\overline{m_F}(\mathcal{W}_R)\ge 1-\varepsilon$, 
and $\left|\int_{\mathcal{W}_R^c}\varphi\,d\overline{m}_F\right|\le \varepsilon$. 
Choose $\psi\in C_c(T^1M)$ with $\psi=\varphi\in W_R$, and $|\psi|\le\|\varphi\|$, and $
\left|\int (\varphi-\psi)\,d\overline{m}_F\right|\le 2\varepsilon$. 

By the above choices, 
$$
m_{t,W}(\varphi)-m_{t,W}(\psi)|\le \|\varphi\|_\infty m_{t,}(W_R^c)\le \varepsilon\|\varphi\|_\infty\,.
$$
By Proposition \ref{prop:vague},  $m_{t,W}(\psi)\to \int \psi\,d\overline{m}_F$, which is $2\varepsilon$-close to $\int\varphi \,d\overline{m}_F$. 
The result follows.

\subsubsection{Proof of Theorem \ref{theo:equid}}\label{final}

The proof is elementary and similar to the deduction of \cite[Thm 9.14]{PPS} (see Theorem \ref{theo:equid-vague-annulus}) from \cite[Thm 9.11]{PPS} (see Theorem \ref{theo:22}), 
but in the other direction. 
Let us begin with an elementary  lemma, which is a reformulation of \cite[Lemma 9.5]{PPS}. 

\begin{lemm} Let $I$ be a discrete set and $f,g:I\to [0,+\infty[$ be maps with $f$ proper. For all $c>0$ and 
$\delta,\kappa\in\bbR$, with $\delta+\kappa>0$ , the following are equivalent:
\begin{enumerate}
\item
  as $t\to +\infty$, $\displaystyle 
\sum_{i\in I, t-c<f(i)\le t} g(i) \sim \frac{1-e^{-c\delta}}{\delta}e^{\delta t}\,;
$
\item  as $t\to +\infty$, 
$
\displaystyle\sum_{i\in I,f(i)\le t}e^{\kappa f(i)} g(i)\sim \frac{e^{(\delta+\kappa)t}}{\delta+\kappa}\,.
$
\end{enumerate}
\end{lemm}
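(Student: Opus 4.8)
The plan is to recast both statements as asymptotics for the non-decreasing cumulative functions $G(t):=\sum_{i\in I,\ f(i)\le t}g(i)$ and $\Phi(t):=\sum_{i\in I,\ f(i)\le t}e^{\kappa f(i)}g(i)$. These are finite for every $t$, since $f$ proper on the discrete set $I$ forces $\{i:f(i)\le t\}$ to be finite; in particular $G$ and $\Phi$ are bounded on every bounded interval, and they jump at the same points $f(i)$, with $\Delta\Phi(f(i))=e^{\kappa f(i)}\Delta G(f(i))$. Assertion (2) is literally $\Phi(t)\sim e^{(\delta+\kappa)t}/(\delta+\kappa)$, while assertion (1) is $G(t)-G(t-c)\sim\frac{1-e^{-c\delta}}{\delta}e^{\delta t}$. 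I will assume $\delta>0$ (the case relevant here, $\delta=\delta_F$; $\delta=0$ is an easy variant), and prove the statement by chaining the two auxiliary equivalences
\[
(1)\iff G(t)\sim \tfrac1\delta e^{\delta t}
\qquad\text{and}\qquad
G(t)\sim\tfrac1\delta e^{\delta t}\iff (2).
\]

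For the first equivalence, the implication $G(t)\sim\frac1\delta e^{\delta t}\Rightarrow(1)$ is immediate: dividing $G(t)-G(t-c)$ by $e^{\delta t}$ and taking the $\limsup$ and the $\liminf$ separately, monotonicity of $G$ gives in both cases $\frac1\delta(1-e^{-c\delta})$, hence the claimed limit. This ``differentiation of an asymptotic'', usually illegitimate, is licit here precisely because $G$ is monotone, i.e. because the weights $g(i)$ are non-negative --- the one structural input that makes the lemma true. For the converse, I would fix $t$ large, pick $N=N(t)$ with $t-(N+1)c\in[0,c)$, and write $G(t)=\sum_{k=0}^N\bigl(G(t-kc)-G(t-(k+1)c)\bigr)+G(t-(N+1)c)$, with $G(t-(N+1)c)\le\sup_{[0,c]}G<\infty$. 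Given $\e>0$, (1) lets me replace each difference with $t-kc\ge T_0(\e)$ by $\frac{1-e^{-c\delta}}{\delta}e^{\delta(t-kc)}$ up to a factor $1\pm\e$; the boundedly many remaining differences, together with the boundary term, contribute $O(1)$ since $G$ is bounded on $[0,T_0]$; and summing the geometric series $\sum_{k\ge0}e^{-\delta kc}=(1-e^{-\delta c})^{-1}$ gives $G(t)=\frac{1-e^{-c\delta}}{\delta}\cdot\frac{1}{1-e^{-\delta c}}e^{\delta t}(1+O(\e))+O(1)=\frac1\delta e^{\delta t}(1+O(\e))$, so $G(t)\sim\frac1\delta e^{\delta t}$ on letting $\e\to0$. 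Positivity of $\delta$ is used both to sum the series and to absorb the $O(1)$.

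For the second equivalence I would use Abel summation, i.e. Riemann--Stieltjes integration by parts against the step functions $G$ and $\Phi$: this yields $\Phi(t)=e^{\kappa t}G(t)-\kappa\int_0^t e^{\kappa s}G(s)\,ds$ and, symmetrically, $G(t)=e^{-\kappa t}\Phi(t)+\kappa\int_0^t e^{-\kappa s}\Phi(s)\,ds$. Substituting $G(s)\sim\frac1\delta e^{\delta s}$ into the first identity and using $\delta+\kappa>0$ to evaluate $\int_0^t e^{(\delta+\kappa)s}\,ds\sim\frac{1}{\delta+\kappa}e^{(\delta+\kappa)t}$ (the error coming from the $o(1)$ relative error on $G$ being $o(e^{(\delta+\kappa)t})$, again because $\delta+\kappa>0$) gives $\Phi(t)\sim\frac{e^{(\delta+\kappa)t}}{\delta}\bigl(1-\frac{\kappa}{\delta+\kappa}\bigr)=\frac{e^{(\delta+\kappa)t}}{\delta+\kappa}$, which is (2); substituting $\Phi(s)\sim\frac{1}{\delta+\kappa}e^{(\delta+\kappa)s}$ into the second identity and using $\delta>0$ to evaluate $\int_0^t e^{\delta s}\,ds$ recovers $G(t)\sim\frac1\delta e^{\delta t}$. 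The only genuinely delicate point in the whole argument is the converse half of the first equivalence, namely passing from the annulus asymptotic to the cumulative one \emph{uniformly in $t$}: one must check that the boundedly many annuli near $f=0$, on which (1) gives no information, are negligible against $e^{\delta t}$, and this is exactly where $\delta>0$ and the boundedness of $G$ on bounded sets (a consequence of properness of $f$) enter. Everything else is routine manipulation of geometric series and of integration by parts.
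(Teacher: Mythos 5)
Your argument is correct in the case you treat, and it is worth noting that the paper does not actually prove this lemma: it only observes that the proof of \cite[Lemma 9.5]{PPS} (which states one implication) in fact yields the equivalence, and refers the reader to that proof. Your write-up supplies a genuine self-contained argument, and it follows the route one would expect: pass to the cumulative functions $G$ and $\Phi$, relate the annulus asymptotic to $G(t)\sim\frac1\delta e^{\delta t}$ by telescoping over annuli and summing a geometric series (the converse direction, which you correctly identify as the only delicate step, is exactly where positivity of $\delta$, finiteness of $G$ on bounded sets from properness of $f$, and non-negativity of $g$ enter), and then transfer between $G$ and $\Phi$ by Abel/Stieltjes summation, using $\delta+\kappa>0$ to control the integrated error terms. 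One small point in that last step deserves a word: the two main terms $\frac1\delta e^{(\delta+\kappa)t}$ and $\frac{\kappa}{\delta(\delta+\kappa)}e^{(\delta+\kappa)t}$ are subtracted, so you should note explicitly that their difference $\frac{1}{\delta+\kappa}e^{(\delta+\kappa)t}$ is a \emph{nonzero} multiple of $e^{(\delta+\kappa)t}$ dominating the $o(e^{(\delta+\kappa)t})$ errors; this is guaranteed by $\delta+\kappa>0$, so nothing breaks, but the cancellation check is the one place where ``routine manipulation'' could silently fail.

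The only genuine restriction is your standing assumption $\delta>0$. The lemma as stated allows any $\delta\in\bbR$ with $\delta+\kappa>0$ (for $\delta=0$ the displayed constant $\frac{1-e^{-c\delta}}{\delta}$ must anyway be read as its limit $c$). For $\delta<0$ your pivot statement $G(t)\sim\frac1\delta e^{\delta t}$ is impossible --- $G$ is non-negative and non-decreasing while the right-hand side is negative --- so the chaining strategy breaks; one must instead work with the tail $G(\infty)-G(t)\sim-\frac1\delta e^{\delta t}$ and redo the Abel summation from $t$ to $+\infty$. Since the lemma is applied in the paper only with $\delta=\delta_F>0$ (positivity of $\delta_F$ being assumed without loss of generality), this does not affect the application, but as a proof of the lemma as stated it is incomplete without that extra case.
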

\begin{proof} Note that eventhough only one implication of the above lemma is stated in \cite[Lemma 9.5]{PPS}, its proof gives indeed the equivalence. We refer the reader to \cite[p. 182]{PPS} for details.  
\end{proof}

Now,  let us conclude the proof of  Theorem \ref{theo:equid}. By linearity, it is 
enough to prove it for nonnegative maps $\varphi$ that satisfy $ \int\varphi d\overline{m}_F\neq 0$. 
The desired result follows then from Theorem \ref{theo:equid-annuli} and applying the above lemma with $I=\mathcal{P}'_\mathcal{W}$, and for $p\in I=\mathcal{P}'_\mathcal{W}$, 
$f(p)=\ell(p)$ and $\displaystyle g(p)=e^{\int_p F}\frac{\int_p \varphi}{\int\varphi d\overline{m}_F}$. .


\section{Narrow equidistribution on $CAT(-1)$ metric spaces}

The proof of Theorem \ref{theo:CAT(-1)} is exactly the same as the above proof in the Riemannian case. Nevertheless, in this generality, the basic ingredients which we use (vague equidistribution for periodic orbits and finiteness criterion for Gibbs measure) are only known for the potential $F = 0$. We just mention in this section which parts of our proof have to be adapted, and how. 

First, the definition of the geodesic flow and its invariant measures is now well-known, with many properties established by  Roblin
\cite{Roblin}. 

The vague equidistribution result which we use, Theorem \ref{theo:equid-vague-annulus} above (Theorem 9.14 of \cite{PPS}), 
had been established earlier in the case $F=0$ in the $CAT(-1)$-setting in \cite[Thm 9.1.1]{Roblin}.

The finiteness criterion for Gibbs measures, Theorem \ref{finiteness-criterion} above (\cite{PS16}), has been extended in 
\cite[Theorem 4.16]{CDST} to the Gromov-hyperbolic setting (which includes $CAT(-1)$ spaces) when $F=0$.

Shadow Lemma \ref{lemm:shadow}, Proposition \ref{prop:nice} and Lemma \ref{lemm:useful} all hold without difficulty in the $CAT(-1)$-setting 
(see  \cite{Roblin} when $F=0$ and \cite{BPP} for general potentials).
The arguments of the proofs of Propositions \ref{prop:vague} and \ref{prop:tight} do not use the Riemannian structure, and hold in the $CAT(-1)$-setting. 

Theorem \ref{theo:CAT(-1)} follows.

\begin{rema}\rm 
Our main result probably holds for a general H\"older-continuous potential  in the $CAT(-1)$-setting. We do not prove it in this generality here, because vague equidistribution for periodic orbits and finiteness criterion for Gibbs measure, although likely to be true, would be very long to check.  

Let us mention what would be the ingredients of a proof.

\begin{itemize}
\item In the $CAT(-1)$-setting, thermodynamical formalism with nonzero potentials has already been considered. The construction of Gibbs measures and the variational principle are proven in Roblin \cite{Roblin} when $F=0$ and Broise-Parkonnen-Paulin \cite{BPP} for general H\"older-continuous potentials. 
\item The finiteness criterion for Gibbs measures of Pit-Schapira \cite{PS16} is extended in \cite{CDST} in the Gromov-hyperbolic setting for the potential $F=0$ and should probably
hold also for general potentials in the $CAT(-1)$-setting. (Note that thermodynamical formalism with nonzero potentials is possible on $CAT(-1)$-spaces, but more delicate in general Gromov-hyperbolic spaces.)
\item The equidistribution theorem of periodic orbits w.r.t. the vague topology is proven in \cite{Roblin} in the $CAT(-1)$-setting for $F=0$, in \cite{PPS} for Riemannian negatively curved manifodlds, and in \cite{BPP}
on real trees. It seems that the restriction to manifolds and trees in the equidistribution theorem \cite[Thm. 11.1]{BPP} was only motivated by the applications that the authors had in mind. 
As they were  dealing with thermodynamical formalism in the $CAT(-1)$-setting, it is likely that this vague equidistribution theorem should also hold on $CAT(-1)$-spaces. 
\end{itemize}

\end{rema}

\addcontentsline{toc}{section}{References}
\bibliography{biblio}
\bibliographystyle{amsalpha}
\end{document}